\documentclass[11pt]{amsart}
   \usepackage{graphicx}
\usepackage{amssymb}
\usepackage[all]{xy}
\usepackage{enumitem}
\usepackage[mathscr]{euscript}
\usepackage{hyperref}
\newtheorem{theorem}{Theorem}[section]
\newtheorem{proposition}[theorem]{Proposition}
\newtheorem{corollary}[theorem]{Corollary}
\newtheorem*{thm}{Theorem}
\newtheorem{lemma}[theorem]{Lemma}
\theoremstyle{definition}

\theoremstyle{remark}
\newtheorem{remark}[theorem]{Remark}



\newcommand{\ccd}{\mathrm{cd}}
\newcommand{\vcd}{\mathrm{vcd}}

\newcommand{\PD}{\textup{PD}}


\newcommand{\Z}{\mathbb{Z}}

\newcommand{\F}{\mathbb{F}}
\newcommand{\N}{\mathbb{N}}





\newcommand{\FP}{\mathrm{FP}}

\newcommand{\argu}{\hbox to 7truept{\hrulefill}}

\title{Subgroups of pro-$p$ $\PD^3$-groups}
\author{I.~Castellano and P.~Zalesskii}

\begin{document}
\begin{abstract} We study 3-dimensional  Poincar\'e duality pro-$p$ groups  in the spirit of the work by Robert Bieri and Jonathan
Hillmann,  and show that if such a pro-$p$ group $G$ has a nontrivial finitely presented subnormal subgroup of infinite index, then either the subgroup is cyclic and normal, or the subgroup is cyclic and the group is polycyclic, or the subgroup is Demushkin and normal in an open subgroup of $G$. 

Also, we describe the centralizers of finitely generated subgroups of  3-dimensional  Poincar\'e duality pro-$p$ groups.
\end{abstract}
\keywords{Pro-$p$ groups, Demushkin groups, Poincar\'e duality, subgroups of pro-$p$ $\PD^n$-groups, centralizers}

\maketitle

In algebraic topology, Poincar\'e duality theorem expresses the symmetry between the homology and cohomology of closed orientable manifolds. The notion of a Poincar\'e duality group of dimension $n$ (or $\PD^n$-group, for short) originates as a purely algebraic analogue of the notion of an $n$-manifold group, that is, the fundamental group of an aspherical $n$-manifold. For dimension $n = 1 \text{ or } 2$, the modelling of $n$-manifold groups by $\PD^n$-groups is precise: the only such Poincar\'e duality groups are $\Z$ and surface groups.
In low dimension, the critical case is $n = 3$; whether every  $\PD^3$-group is a 3-manifold group is still open and represents the main problem in this area.

The notion of a duality group carries over to the realm of profinite groups but the literature has independently developed  two definitions of a profinite $\PD^n$-group $G$ at a prime $p$ whose equivalence is uncertain. The definitions differ in that one requires the profinite group $G$ to be of type $\FP_\infty$ over $\Z_p$ and the other does not.  For pro-$p$ groups the two definitions coincide and the theory becomes more amenable. The cyclic group $\Z_p$ is the only pro-$p$ $\PD^1$-group, whereas  the pro-$p$ $\PD^2$-groups  are the Demushkin groups. 

\smallskip

 In this article we focus on pro-$p$ $\PD^3$-groups and we start an investigation of their subgroups in the spirit of the work by Robert Bieri and Jonathan Hillmann  \cite{BH91,H06}, who addressed subgroup structure questions on $\PD^3$-groups motivated by considerations from 3-manifold topology (see also \cite{H16} and references there).  Typically, the existence of a subgroup satisfying some prescribed properties gives rather strong consequences for the structure of the $\PD^3$-group itself. For instance, for pro-$p$ $\PD^3$-groups we prove the following result (see Theorem~\ref{thm:subnormal}):
\begin{thm}  Let $G$ be a pro-$p$ $\PD^3$-group which has a nontrivial finitely presented subnormal subgroup $H$ of infinite index. Then one of the following holds:
\begin{enumerate}
\item $H$ is Demushkin and it is normal in an open subgroup $U$ of $G$ such that  $U/H\cong\Z_p;$ 
\item $H$ is cyclic and $G$ is virtually polycyclic; 
\item $H$ is cyclic and  normal in $G$ with $G/H$ virtually Demushkin.
\end{enumerate}
\end{thm}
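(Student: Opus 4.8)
The plan is to prove the statement first for a \emph{normal} subgroup $H$ and then reduce the general subnormal case to it by induction on the defect of $H$. Suppose first $H\triangleleft G$. Since $H\neq 1$ is torsion-free and of infinite index, the pro-$p$ form of Strebel's theorem gives $\ccd_p H\in\{1,2\}$ and, when $\ccd_p H=2$, forces $H$ to be a pro-$p$ $\PD^2$-group, hence Demushkin; when $\ccd_p H=1$ the group $H$ is free pro-$p$, and finite presentability forces finite rank. In either case $H$ is of type $\FP_\infty$, hence so is $Q:=G/H$. I would then feed $1\to H\to G\to Q\to 1$ into the Lyndon--Hochschild--Serre spectral sequence $H^p\!\big(Q,H^q(H,\Z_p[[G]])\big)\Rightarrow H^{p+q}(G,\Z_p[[G]])$: writing $\Z_p[[G]]\cong\Z_p[[H]]\,\widehat{\otimes}\,\Z_p[[Q]]$ as a left $\Z_p[[H]]$-module one gets $H^q(H,\Z_p[[G]])\cong H^q(H,\Z_p[[H]])\,\widehat{\otimes}\,\Z_p[[Q]]$ with the diagonal $Q$-action, which is concentrated in the single degree $q=\ccd_p H$, so the spectral sequence collapses to $H^m(G,\Z_p[[G]])\cong H^{m-\ccd_p H}\!\big(Q,\,H^{\ccd_p H}(H,\Z_p[[H]])\,\widehat{\otimes}\,\Z_p[[Q]]\big)$. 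After untwisting, the coefficient module on the right is a product of $\dim_{\F_p}H^{\ccd_p H}(H,\F_p[[H]])$ copies of the free module $\Z_p[[Q]]$; comparing with the Poincar\'e duality of $G$ (the left side is $\Z_p$ in degree $3$ and $0$ elsewhere, and cohomology commutes with the product because $Q$ is $\FP_\infty$) forces that $\F_p$-dimension to be $1$ and forces $Q$ to be a pro-$p$ $\PD^{3-\ccd_p H}$-group. Thus either $\ccd_p H=1$, whence $H\cong\Z_p$ and $G/H$ is Demushkin (this is (3), even with ``Demushkin'' in place of ``virtually Demushkin'', and $U=G$), or $\ccd_p H=2$, whence $H$ is Demushkin and $G/H\cong\Z_p$ (this is (1) with $U=G$).

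For the general case, fix a shortest subnormal chain $H=H_0\triangleleft H_1\triangleleft\cdots\triangleleft H_k=G$ and induct on $k$, the case $k=1$ being the normal case. For $k\ge 2$ put $L:=H_{k-1}\triangleleft G$, so $H$ is subnormal of defect $k-1$, nontrivial, finitely presented and of infinite index in $L$. If $[G:L]<\infty$ then $L$ is open, hence a pro-$p$ $\PD^3$-group, and the inductive hypothesis applies to $H\le L$: conclusions (1) and (2) transfer verbatim to $G$, while in conclusion (3) one passes to the normal closure $H^G$ --- a product of finitely many copies of $\Z_p$, each normal in $L$ since $H\triangleleft L\triangleleft G$, hence polycyclic and finitely presented --- and applies the normal case to $H^G\triangleleft G$, obtaining either that $H^G$ (hence $G$) is virtually polycyclic of Hirsch length $3$, i.e.\ (2), or that $H^G\cong\Z_p$, in which case $H$ is characteristic in $H^G$, so normal in $G$, i.e.\ (3). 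If instead $[G:L]=\infty$ then $\ccd_p L\in\{1,2\}$ by Strebel. When $\ccd_p L=1$, $L$ is free pro-$p$; running up the chain and using that a nontrivial finitely generated normal subgroup of a free pro-$p$ group has finite index shows $[L:H]<\infty$, so $L$ is finitely generated free, hence finitely presented, and the normal case applied to $L\triangleleft G$ gives $L\cong\Z_p$; then $H\cong\Z_p$ is characteristic in $L$, normal in $G$, with $G/H$ virtually Demushkin, i.e.\ (3). When $\ccd_p L=2$, $L$ is Demushkin, and repeating the dichotomy one level down inside the pro-$p$ $\PD^2$-group $L$ (Strebel inside $L$ pins free subgroups of infinite index down to $\Z_p$; products of normal copies of $\Z_p$ are poly-$\Z_p$) again forces $H\cong\Z_p$, or exhibits $H$ as Demushkin and normal in an open subgroup; walking the chain back up to its first term of cohomological dimension $3$ --- which by Strebel is automatically open in $G$ --- then places $G$ in (1), (2) or (3).

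I expect two hard points. The first, in the normal case, is extracting from the spectral sequence not merely a bound on $\ccd_p$ but the sharp conclusion that $G/H$ (respectively $H$) is a Poincar\'e duality group of the complementary dimension and that a free $H$ has rank one; this is the ``module side'' of the argument and requires care with the pseudocompact module $H^{\ccd_p H}(H,\Z_p[[H]])\,\widehat{\otimes}\,\Z_p[[Q]]$ and with the interaction between cohomology and products, for which the finiteness hypotheses are essential. The second, and the main obstacle, is the subnormal reduction itself: the normalizer $N_G(H)$ need not be open and subnormality is not inherited by quotients, so one cannot iterate the normal case blindly; one must control the conjugates of the intermediate terms $H_i$ (to see there are finitely many, so that $N_G(H)$ is open) and the structure of the normal closure $H^G$, invoking at several points the companion description of centralizers of finitely generated subgroups. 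The qualifier ``virtually'' in (1) and (3) and the degenerate alternative (2) are exactly the residue of the finite-index passages and of the iterated $\Z_p$-by-$\Z_p$ extensions that may accumulate along the chain, forcing a polycyclic group of Hirsch length $3$, which by Poincar\'e duality must be virtually all of $G$; the bookkeeping that guarantees one always lands in one of the three listed alternatives with the correct qualifier is the bulk of the technical work.
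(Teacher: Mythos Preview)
Your normal case is essentially Proposition~\ref{JZ} (from \cite{JZ}); the spectral-sequence sketch is plausible, though the ``untwisting'' and the commutation of cohomology with products hide real work. The paper simply quotes that result.

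The genuine gap is in the subnormal reduction, in the subcase $[G:L]=\infty$ with $\ccd(L)=2$. You assert that $L$ is Demushkin, but this requires applying your normal case (or Proposition~\ref{JZ}) to $L\trianglelefteq G$, and both demand that $L$ be finitely generated. Nothing guarantees this: $L=H_{k-1}$ is an arbitrary term of the subnormal chain, and an infinite-index normal subgroup of a pro-$p$ $\PD^3$-group may well be infinitely generated. In your subcase $\ccd(L)=1$ you do establish $[L:H]<\infty$ and hence that $L$ is finitely generated, but no analogous step appears when $\ccd(L)=2$. Even granting that $L$ is Demushkin, ``repeating the dichotomy one level down inside $L$'' is not covered by your inductive hypothesis, which concerns $\PD^3$-groups; you would need a separate statement for Demushkin groups, and your sketch does not handle, for instance, the possibility that $H$ is non-abelian free inside $L$.

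The paper's proof runs bottom-up rather than top-down. It fixes the least index $k$ with $[J_k:H]=\infty$, so that $J_{k-1}$ is finitely presented (it contains $H$ with finite index), and then splits on $\ccd(H)$. The substantive step is ruling out the case where $H$ is non-abelian free pro-$p$: one shows $\ccd(J_1)=2$, analyses the images $HH^g/H$ inside the virtually free quotient $J_1/H$ for $g\in J_2$, and in the only surviving subcase deduces that $J_1$ is finitely presented and hence Demushkin, contradicting the fact \cite{HS} that a non-soluble Demushkin group has no nontrivial finitely generated normal free subgroup. This mechanism is precisely what keeps every intermediate group in play finitely presented, and it has no counterpart in your plan.
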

The  result is the pro-$p$ analogue of Bieri and Hillman's Theorem proved in \cite{ BH91}.
The structure of the proof we provide has much in common with the classical result: we discuss all possible cases determined by the cohomological dimension of the  subnormal subgroup $H$.
But each case needs arguments specific to pro-$p$ groups. For example, some of those arguments relies on well-known results on profinite $\PD^n$-groups of~\cite{JZ,WZ}.

\smallskip

 Following the approach of  \cite{H06}, we also study centralizers of subgroups of pro-$p$ $\PD^3$-groups. We produce the  description below  which must account additional cases (especially for pro-2 groups) that do not occur in the discrete case (see Theorem~\ref{thm:centralizers}):
\begin{thm}\label{thm:centralizers}

Let $G$ be a pro-$p$ $\PD^3$-group and $H\neq1$ is a finitely generated subgroup of $G$ with $C_G(H)\neq 1$. Then one of the following holds:
\begin{enumerate}
\item $H\cong\Z_p$ and $C_G(H)/H$ is virtually Demushkin;
\item $H$ is a non-abelian Demushkin group and $C_G(H)\cong \Z_p$,;
\item $H\cong\Z_p\times\Z_p$, $C_G(H)\cong\Z_p^3$  and $G$ is virtually $\Z_p^3$;
\item $H\cong \Z_2\rtimes \Z_2$ is generalized dihedral pro-$2$ group, $C_G(H)\cong \Z_2\rtimes \Z_2$ and $G\cong \Z_2\times(\Z_2\rtimes\Z_2)$;
\item $H$ and $C_G(H)$ are free pro-$p$ groups;
\item $H$ and $C_G(H)$ are polycyclic;
\item $H\cong\Z_p$ and $C_G(H)/H$ is  virtually free pro-$p$.
\item $H$ is cyclic by virtually free and $C_G(H)\cong \Z_p$.
\end{enumerate}
\end{thm}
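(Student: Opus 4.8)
The plan is to run a case analysis on the cohomological dimensions of $H$ and of $K:=C_G(H)$ inside the subgroup $L:=HK\le G$. Since $K$ centralises $H$, both $H$ and $K$ are normal in $L$, the intersection $Z:=H\cap K$ equals $Z(H)$, and $Z\le Z(K)\cap Z(L)$; moreover in $L/Z$ the images of $H$ and $K$ meet trivially, so $L/Z\cong(H/Z)\times(K/Z)$. The tools I would use freely are: $G$, and hence every closed subgroup, is torsion‑free; a closed subgroup has $\ccd\le 3$, with equality iff it is open, and open subgroups are again pro‑$p$ $\PD^{3}$‑groups (Strebel's theorem for pro‑$p$ $\PD^{n}$‑groups, cf.\ \cite{WZ}); a pro‑$p$ group of $\ccd\le 1$ is free pro‑$p$; $\Z_p$ is the only pro‑$p$ $\PD^{1}$‑group and the pro‑$p$ $\PD^{2}$‑groups are exactly the Demushkin groups; a direct factor of a pro‑$p$ $\PD^{n}$‑group is again such a group with dimensions adding, and if $N\trianglelefteq M$ with $N$ a pro‑$p$ $\PD^{m}$‑group and $M$ a pro‑$p$ $\PD^{n}$‑group then $M/N$ is a pro‑$p$ $\PD^{n-m}$‑group (\cite{WZ,JZ}); the structure theorem for pro‑$p$ $\PD^{3}$‑groups with non‑trivial centre (a consequence of \cite{BH91,WZ,JZ} together with the paper's subnormal‑subgroup theorem), namely such a group is either virtually $\Z_p^{3}$ or has centre $\cong\Z_p$ with virtually Demushkin quotient; and the fact that an abelian closed subgroup of $G$ of finite cohomological dimension is $\cong\Z_p^{\gamma}$ for some $\gamma$. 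In particular $Z\cong\Z_p^{\gamma}$ with $\gamma\in\{0,1,2,3\}$.

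The computational engine is the bound $\ccd L\le 3$ together with comparisons, via Lyndon--Hochschild--Serre spectral sequences, of the cohomological dimensions of $H,K,Z$ and of their quotients by $Z$: when the quotients involved are torsion‑free one obtains $\ccd H+\ccd K\le 3+\gamma$, because the top mod‑$p$ class of $(H/Z)\times(K/Z)$ survives in $1\to Z\to L\to L/Z\to 1$ (a pro‑$p$ group acting trivially on $\F_p$). Since $\ccd H,\ccd K\ge 1$, with $\gamma\le\min\{\ccd H,\ccd K\}$ when $Z\ne1$ and $L=H\times K$ when $Z=1$, only a short list of triples $(\ccd H,\ccd K,\gamma)$ survives, and in each the three structure theorems do the identification.

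Concretely, organising by $\ccd H$: if $\ccd H=1$ then $H$ is free pro‑$p$, and either $H\cong\Z_p$ — whence $K$ is $\cong\Z_p$ (case (5)), or has $\ccd 2$ with $K/H$ free (case (7)), or has $\ccd 3$ with $K/H$ Demushkin (case (1)) — or $H$ is non‑abelian free, $Z=1$, $L=H\times K$, and since $H$ cannot be the $\PD^{1}$‑factor of an open $\PD^{3}$‑subgroup we get $\ccd K=1$, i.e.\ (5). If $\ccd H=2$ and $Z=1$, then $L=H\times K$ is an open $\PD^{3}$‑group, so $H$ is a non‑abelian Demushkin factor and $K\cong\Z_p$: case (2). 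If $\ccd H=2$ and $Z\cong\Z_p$, then $\ccd K\le 2$; for $\ccd K=1$ one has $K\cong\Z_p$ and $H$ equal to $\Z_p$‑by‑(free pro‑$p$), i.e.\ (8), while for $\ccd K=2$ the bound forces $L$ open, $L/Z$ a non‑trivial direct product that is $\PD^{2}$, hence $\cong\Z_p^{2}$, so $H/Z\cong K/Z\cong\Z_p$ and $H,K$ are polycyclic: case (6). If $\ccd H=2$ and $Z\cong\Z_p^{2}$ then $H\cong\Z_p^{2}$, and $K$ is either $\cong\Z_p^{2}$ (case (6)) or open $\PD^{3}$ with central $\Z_p^{2}$, hence $\cong\Z_p^{3}$ with $G$ virtually $\Z_p^{3}$: case (3). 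Finally, if $\ccd H=3$ then $H$ is open and $Z\ne1$ (else $L=H\times K$ would have $\ccd>3$), and the structure theorem applied to $H$ pins down $H$ and, using $K/Z\hookrightarrow G/H$ finite, forces $K\cong\Z_p$ or places us in the virtually $\Z_p^{3}$ world.

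The main obstacle I expect is torsion in the quotients $H/Z$ and $L/Z$, which happens only for $p=2$ and is precisely where the book‑keeping above breaks: then $\ccd(L/Z)=\infty$, $L/Z$ is no longer Demushkin, and the step ``$L/Z$ is $\PD^{2}$, hence $\Z_p^{2}$'' collapses. This is the source of the exceptional case (4), which has to be extracted by a direct study of $2$‑generated poly‑$\Z_2$ groups and of $\Z_2\times(\Z_2\rtimes\Z_2)$, using that $\Aut(\Z_2)$ has $2$‑torsion whereas $\Aut(\Z_p)$ has none for $p$ odd, so that the generalized dihedral pro‑$2$ group is exactly what can occur as such an $H$ or $C_G(H)$. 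A secondary difficulty is the branch $\ccd H=3$ ($H$ open), which requires the full structure theory of pro‑$p$ $\PD^{3}$‑groups with non‑trivial centre — and so, indirectly, the paper's subnormal‑subgroup theorem — together with bridging ``finitely generated'' to the finite presentation / type‑$\FP$ hypotheses under which those results and the $\PD^{n}$‑facts about normal subgroups are available.
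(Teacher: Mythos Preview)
Your organization differs from the paper's: you case-split on $\ccd H$ and bound $\ccd K$ via the inequality $\ccd H+\ccd K\le 3+\gamma$ extracted from $L/Z\cong(H/Z)\times(K/Z)$, whereas the paper splits directly on $\ccd\bigl(HC_G(H)\bigr)\in\{1,2,3\}$ and then applies Proposition~\ref{JZ} and Proposition~\ref{prop:WZ} to the normal subgroups $H$ and $\mathcal Z(H)$ of $L$. The paper's route is shorter because it never needs your spectral-sequence inequality or the K\"unneth step: once $L$ is open $\PD^3$, Proposition~\ref{JZ} pins $H$ down as cyclic, Demushkin, or open; once $\ccd L=2$, Proposition~\ref{prop:WZ} applied to $\mathcal Z(H)\cong\Z_p$ makes $L/\mathcal Z(H)$ virtually free with no torsion-freeness caveat required. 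Your approach buys a more symmetric view of $H$ and $K$ and makes the admissible triples $(\ccd H,\ccd K,\gamma)$ explicit, at the cost of the $p=2$ torsion book-keeping you correctly flag.

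There is, however, a genuine gap in your $\ccd H=3$ branch. You deduce $Z\ne1$ and, when $Z\cong\Z_p$, that $K\cong\Z_p$; but you never verify that this configuration lands in one of the eight listed cases. It does not: if $H$ is open with $\mathcal Z(H)\cong\Z_p$ and $H/\mathcal Z(H)$ is virtually Demushkin but \emph{not} polycyclic, then $H$ is neither cyclic, nor Demushkin, nor free, nor cyclic-by-virtually-free, nor polycyclic, so (1)--(8) all fail. This is exactly why the version of the theorem actually proved in the paper carries a ninth clause (``$C_G(H)$ is cyclic, $H$ is open and $G$ is cyclic by virtually Demushkin''), omitted from the introductory statement you were given. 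Two smaller points: the structure of a pro-$p$ $\PD^3$-group with non-trivial centre already follows from Proposition~\ref{JZ} and Proposition~\ref{prop:virtuallyZp3}, so invoking the subnormal-subgroup theorem is unnecessary and would invert the paper's logical order; and in your $\ccd H=2$, $\gamma=1$, $\ccd K=1$ sub-branch you should write ``virtually free'' rather than ``free'', since $H/Z$ can have torsion when $p=2$.
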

\subsection*{Notations} Morphisms of topological groups are assumed to be continuous and subgroups are closed. Therefore, we simply write $H \leq G$ for a closed subgroup $H$ of $G$, whereas
$K \leq_o G$ denote an open subgroup $K$ of $G$. Similarly, we use $H\trianglelefteq G$ and $K\trianglelefteq_o G$ for normal $H$ and $K$.
 
 In presence of either a topological module or a topological group, the term finitely generated (resp. presented) indicates the  pro\-perty of being topologically finitely generated (resp. presented).

\section{Preliminaries}

\subsection{Cohomological dimension of pro-$p$ groups}
For an arbitrary prime $p$ and a pro-$p$ group $G$, the {\em cohomological dimension of $G$} can be defined as
$$\ccd(G)=\sup\{n\in\N\mid H^n(G,\F_p)\neq0\}.$$
 One has the following well-known result.
\begin{proposition}[\protect{\cite[Section~7]{RZ}}]\label{prop:cd} For a pro-$p$ group $G$  one has
\begin{enumerate}
\item $\ccd(G) \leq n\in\N$ if and only if $H^{n+1}(G,\mathbb{F}_p) = 0$.
\item $\ccd(G) = n$ implies $H^n(G,M) \neq 0$
for every finite $G$-module $M$.
\item $\ccd(G)=1$ if and only if $G$ is free of rank at least 1.
\end{enumerate}
\end{proposition}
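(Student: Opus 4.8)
The plan is to reduce all three parts to cohomology with trivial $\F_p$-coefficients together with the standard homological algebra of profinite groups. For (1) only the implication $H^{n+1}(G,\F_p)=0\Rightarrow\ccd(G)\leq n$ requires an argument. First I would pass from $\F_p$ to an arbitrary finite $p$-primary $G$-module $M$: since $G$ is pro-$p$, $M$ has a composition series all of whose factors are the trivial module $\F_p$ (a nontrivial finite $p$-group acting on a nonzero finite $p$-group has a nonzero fixed vector), so an induction on $|M|$ using the long exact sequence of a composition step yields $H^{n+1}(G,M)=0$; writing an arbitrary discrete $p$-torsion module as a direct limit of such $M$ extends this to all discrete $p$-torsion modules. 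To reach degrees above $n+1$ I would use dimension shifting: embed a discrete $p$-torsion module $A$ into the coinduced module $C=\mathrm{Hom}_{\mathrm{cont}}(G,A)$, which is again $p$-torsion and satisfies $H^{m}(G,C)=0$ for $m\geq1$ by Shapiro's lemma, so the cokernel $A'$ is discrete $p$-torsion with $H^{m+1}(G,A)\cong H^{m}(G,A')$ for $m\geq1$; iterating reduces every $H^{m}(G,A)$ with $m>n$ to the already-settled degree $n+1$. Modules of order prime to $p$ have no positive-degree cohomology, so nothing further is needed.

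For (2), assume $\ccd(G)=n$, so $H^{n}(G,\F_p)\neq0$ and $H^{n+1}(G,\F_p)=0$ by (1). I would show $H^{n}(G,M)\neq0$ for every nonzero finite ($p$-primary) $G$-module $M$ by induction on $|M|$. The base case $M\cong\F_p$ is the hypothesis. For the inductive step choose a simple submodule $N\leq M$, necessarily $N\cong\F_p$ since $G$ is pro-$p$; the long exact sequence of $0\to N\to M\to M/N\to0$ gives a surjection $H^{n}(G,M)\twoheadrightarrow H^{n}(G,M/N)$ because the next term $H^{n+1}(G,N)=H^{n+1}(G,\F_p)$ vanishes, and $H^{n}(G,M/N)\neq0$ by the inductive hypothesis.

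For (3) the ``if'' direction is immediate: a free pro-$p$ group $G$ of rank $\geq1$ has the length-one free resolution $0\to\F_p[[G]]^{(X)}\to\F_p[[G]]\to\F_p\to0$ over the completed group algebra, so $H^{m}(G,-)=0$ for $m\geq2$ while $H^{1}(G,\F_p)=\Hom(G,\F_p)\neq0$. The ``only if'' direction is the pro-$p$ analogue of the Stallings--Swan theorem and is the step I expect to be the crux. I would choose $X\subseteq G$ mapping to an $\F_p$-basis of $G/\Phi(G)$, let $F$ be the free pro-$p$ group on $X$ with the canonical epimorphism $\phi\colon F\twoheadrightarrow G$ (surjective by the Frattini argument and inducing an isomorphism $F/\Phi(F)\xrightarrow{\ \sim\ }G/\Phi(G)$), and set $K=\ker\phi$. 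Feeding $1\to K\to F\to G\to1$ into the five-term inflation--restriction exact sequence with $\F_p$-coefficients and using $H^{2}(G,\F_p)=0$ (from $\ccd(G)=1$ and (1)) together with the fact that inflation $H^{1}(G,\F_p)\to H^{1}(F,\F_p)$ is an isomorphism, one gets $H^{1}(K,\F_p)^{G}=0$; since a pro-$p$ group acting continuously on a nonzero discrete $\F_p$-module has nonzero fixed points, this forces $\Hom(K,\F_p)=0$, hence $K=\Phi(K)$, hence $K=1$ because a nontrivial pro-$p$ group has nontrivial Frattini quotient. Thus $\phi$ is an isomorphism, $G$ is free pro-$p$, and its rank $|X|$ is at least $1$ since $\ccd(G)=1$ forces $G\neq1$. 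The delicate points in this last step are that $K$ need not be finitely generated, so the fixed-point argument must be run for arbitrary discrete $\F_p[[G]]$-modules rather than by a dimension count, and that one needs the precise behaviour of inflation to isolate $H^1(K,\F_p)^G$ in the five-term sequence.
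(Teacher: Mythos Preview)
The paper does not prove this proposition at all: it is stated with a bare citation to \cite[Section~7]{RZ} and no argument is given. Your proposal reconstructs precisely the standard textbook proof (composition-series reduction to $\F_p$, dimension shifting via coinduced modules, and the inflation--restriction/Frattini argument for the characterization of free pro-$p$ groups), which is indeed what one finds in Ribes--Zalesskii and Serre; the argument is correct, with the understood caveat that in part~(2) ``finite $G$-module'' must be read as nonzero and $p$-primary.
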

The {\em virtual cohomological dimension of $G$} is defined by
$$\vcd(G)=\inf\{\ccd(H)\mid H\leq_o G\}.$$

   \begin{proposition}[\protect{\cite{Serre}}]\label{prop:serrefree}
   For a torsion-free pro-$p$ group $G$, $\vcd(G)=\ccd(G)$. In particular, every torsion free virtually free pro-$p$ group is free pro-$p$. 
   \end{proposition}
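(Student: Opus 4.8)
The plan is to prove the two inequalities separately, the easy one by hand and the hard one by reducing to Serre's theorem, and then to read off the ``in particular'' from Proposition~\ref{prop:cd}. First, $\vcd(G)\le\ccd(G)$ holds trivially, since $G$ is an open subgroup of itself. For the reverse inequality it suffices to show that $\ccd(G)=\ccd(U)$ for every $U\leq_o G$ with $\ccd(U)<\infty$: if such a $U$ exists, the infimum defining $\vcd(G)$ equals $\ccd(G)$, and if no such $U$ exists then $\vcd(G)=\infty=\ccd(G)$. Replacing $U$ by its normal core $N\trianglelefteq_o G$ (a closed subgroup of $U$, whence $\ccd(N)\le\ccd(U)$, and closed in $G$, whence $\ccd(N)\le\ccd(G)$; also $\ccd(U)\le\ccd(G)$) and choosing a subnormal chain $N=K_0\trianglelefteq K_1\trianglelefteq\cdots\trianglelefteq K_r=G$ with $[K_{i+1}:K_i]=p$ and each $K_i$ torsion-free, I would reduce the problem, by iterating along the chain, to the following special case, which is Serre's theorem \cite{Serre}: \emph{if $L$ is a torsion-free pro-$p$ group and $K\trianglelefteq L$ is open of index $p$ with $m:=\ccd(K)<\infty$, then $\ccd(L)=m$.}

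I would prove this special case as follows. Since $K$ is closed in $L$ we get $\ccd(L)\ge\ccd(K)=m$, so by Proposition~\ref{prop:cd}(1) it remains to show $H^{m+1}(L,\F_p)=0$. Set $A=\F_p[L/K]\cong\idn_K^L\F_p$; Shapiro's lemma gives $H^i(L,A)\cong H^i(K,\F_p)$, which vanishes for $i>m$. Let $I\subset A$ be the augmentation ideal and let $\sigma$ generate $L/K$. The kernel of $\sigma-1$ acting on $A$ is spanned by the norm element $1+\sigma+\cdots+\sigma^{p-1}$ (a trivial submodule isomorphic to $\F_p$) and its image is $I$, so there are two short exact sequences of $L$-modules
$$0\to I\to A\to\F_p\to0,\qquad 0\to\F_p\to A\to I\to0.$$
Feeding these into the long exact cohomology sequence and using $H^i(L,A)=0$ for $i\ge m+1$, the first gives $H^i(L,I)\cong H^{i+1}(L,\F_p)$ and the second gives $H^i(L,\F_p)\cong H^{i+1}(L,I)$ for all $i\ge m+1$; combining, $H^i(L,\F_p)\cong H^{i+2}(L,\F_p)$ for every $i\ge m+1$, so the mod-$p$ cohomology of $L$ is $2$-periodic above degree $m$.

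The decisive step, which I expect to be the main obstacle, is to exploit torsion-freeness to kill this periodic tail. If $H^{m+1}(L,\F_p)\ne0$, then by the periodicity $H^{m+1+2k}(L,\F_p)\ne0$ for all $k$, so $\ccd(L)=\infty$ while $\vcd(L)\le m<\infty$; Serre's argument (roughly: the non-nilpotent cohomology class implementing the periodicity would have to be detected on a nontrivial finite subgroup) rules this out for a torsion-free pro-$p$ group. Hence $H^{m+1}(L,\F_p)=0$, so $\ccd(L)=m$, completing the reduction and the proof of $\vcd(G)=\ccd(G)$. One could of course instead invoke \cite{Serre} as a black box for this assertion.

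Finally, the ``in particular'' follows at once. If $G$ is torsion-free and virtually free pro-$p$, pick an open free pro-$p$ subgroup $U\leq_o G$, so $\vcd(G)\le\ccd(U)\le1$ and hence $\ccd(G)\le1$ by the first part. If $\ccd(G)=1$, then $G$ is free pro-$p$ of rank $\ge1$ by Proposition~\ref{prop:cd}(3); if $\ccd(G)=0$, then $H^1(G,\F_p)=0$, so $G$ coincides with its Frattini subgroup and therefore $G=1$. In either case $G$ is free pro-$p$.
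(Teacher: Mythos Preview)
The paper does not actually prove this proposition: it is stated with a citation to \cite{Serre} and used as a black box throughout. So there is no ``paper's own proof'' to compare against; what you have written is precisely a sketch of Serre's original argument, which is entirely appropriate.

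Your sketch is correct in outline. One small slip: you have interchanged which short exact sequence produces which isomorphism. From $0\to I\to A\to\F_p\to0$ the connecting map goes $H^i(L,\F_p)\to H^{i+1}(L,I)$, so this sequence yields $H^i(L,\F_p)\cong H^{i+1}(L,I)$ for $i\ge m+1$; the isomorphism $H^i(L,I)\cong H^{i+1}(L,\F_p)$ comes from the other sequence $0\to\F_p\to A\to I\to0$. The conclusion (2-periodicity above degree $m$) is unaffected.

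You are right to flag the step ``torsion-free kills the periodic tail'' as the real content. Your parenthetical description of Serre's mechanism is accurate but is itself the entire theorem; as written, that paragraph is a pointer to \cite{Serre} rather than an argument, which you honestly acknowledge. Since the paper also treats this as a citation, nothing more is required here. The deduction of the ``in particular'' from Proposition~\ref{prop:cd}(3) is clean.
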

Next we state known results on  pro-$p$ groups of finite cohomological dimension frequently used in the paper.
\begin{proposition}[\protect{\cite[Theorem~1.1]{WZ}}]\label{prop:WZ}
Let $G$ be a pro-$p$ group of finite cohomological dimension $n$ and let $N$ be a closed normal subgroup of $G$ of cohomological dimension $k$ such that the order of $H^k(N,\mathbb F_p)$ is finite.Then $\vcd(G/N) = n-k.$
\end{proposition}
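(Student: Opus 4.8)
The plan is to run the Lyndon--Hochschild--Serre spectral sequence of the extension $1\to N\to G\to Q\to1$, where $Q:=G/N$, together with its pullbacks along open subgroups of $Q$. Since $\ccd(G)=n<\infty$ the group $G$ is torsion-free (a nontrivial finite subgroup would have infinite cohomological dimension), so by Proposition~\ref{prop:serrefree} every open subgroup of $G$ again has cohomological dimension $n$; moreover $V:=H^k(N,\F_p)$ is nonzero by Proposition~\ref{prop:cd}(2) and finite by hypothesis, say $\dim_{\F_p}V=d\ge1$. The structural fact that drives everything is that, as $\ccd(N)=k$, the relevant $E_2$-pages $E_2^{p,q}=H^p(-,H^q(N,\F_p))$ are concentrated in rows $0\le q\le k$, so the top row $q=k$ receives no differentials. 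For the lower bound $\vcd(Q)\ge n-k$ I would argue directly: for $U\le_o Q$ with preimage $\widetilde U\le_o G$ (so $\ccd\widetilde U=n$), the nonvanishing of $H^n(\widetilde U,\F_p)$ forces $E_2^{p,q}=H^p(U,H^q(N,\F_p))\neq0$ for some $p+q=n$ with $q\le k$, whence $\ccd(U)\ge p\ge n-k$; taking the infimum over $U$ gives the bound.

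For the upper bound I would pass to the open subgroup $Q_1\le_o Q$ acting trivially on the finite module $V$, with preimage $\widetilde U_1\le_o G$ (so again $\ccd\widetilde U_1=n$); then the top row of its spectral sequence is $E_2^{p,k}=H^p(Q_1,V)\cong H^p(Q_1,\F_p)^{\oplus d}$. Assuming $\ccd(Q_1)$ is finite, say $=M$, I claim $M\le n-k$: otherwise $M+k>n$, and $E_2^{M,k}\neq0$ since $H^M(Q_1,\F_p)\neq0$ and $d\ge1$; every outgoing differential from $E_r^{M,k}$ lands in a subquotient of $H^{M+r}(Q_1,H^{k-r+1}(N,\F_p))$, which vanishes because $M+r>M=\ccd(Q_1)$; hence $E_\infty^{M,k}=E_2^{M,k}\neq0$, contradicting that $E_\infty^{M,k}$ is a subquotient of $H^{M+k}(\widetilde U_1,\F_p)=0$. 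Thus $\vcd(Q)\le\ccd(Q_1)\le n-k$, and together with the lower bound $\vcd(Q)=n-k$.

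The main obstacle is the missing ingredient in the argument above: that $\vcd(Q)<\infty$, i.e. that $Q$ has \emph{some} open subgroup of finite cohomological dimension. This is precisely the point where the finiteness of $H^k(N,\F_p)$ has to be exploited beyond the top row — without it $\vcd(Q)$ can be infinite, for instance for $N$ an infinitely generated free pro-$p$ group inside a free pro-$p$ group $G$. To establish it I would try to reduce to the case where $G$, and hence $N$, is of type $\FP_\infty$ over $\F_p$, so that $\F_p$ admits a length-$n$ resolution over $\F_p[[G]]$ and a length-$k$ resolution over $\F_p[[N]]$ by finitely generated free modules, and then run a Bieri-style argument to build a finite-length free resolution of $\F_p$ over $\F_p[[Q_1]]$; an alternative is an induction on $k$, using that $\ccd(N)=k$ with $H^k(N,\F_p)$ finite controls the top of the cohomology of $N$ uniformly enough to be fed back into the spectral sequence. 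Once any finite bound on $\ccd(Q_1)$ is available, a short downward induction in the spectral sequence above then sharpens it to $n-k$.
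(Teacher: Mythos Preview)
The paper does not supply a proof of this proposition; it is simply quoted as \cite[Theorem~1.1]{WZ} and used as a black box, so there is no in-paper argument against which to compare your proposal.

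On the substance of your argument: the Lyndon--Hochschild--Serre computations you give are correct and standard. The lower bound $\vcd(Q)\ge n-k$ follows exactly as you say, and the implication ``if $\ccd(Q_1)<\infty$ then $\ccd(Q_1)\le n-k$'' is also fine, since the top row $q=k$ receives no incoming differentials and, once $\ccd(Q_1)=M$ is finite, the outgoing ones from $E_r^{M,k}$ land in columns $>M$ and hence vanish. But, as you yourself flag, the entire content of the Weigel--Zalesskii theorem lies in the step you leave open: the finiteness of $\vcd(Q)$. Without it the top-row argument breaks down, because for $m>n-k$ the class in $E_2^{m,k}$ could in principle be killed by differentials landing in columns $m+r$ rather than surviving to $E_\infty$, and you have no a priori control on those columns.

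Your suggested routes to closing this gap are not convincing as stated. The hypothesis is only that the single group $H^k(N,\F_p)$ is finite; there is no $\FP_\infty$ assumption on $G$ or on $N$, so a Bieri-style construction of a finite-length free resolution over $\F_p[[Q_1]]$ is not directly available, and you give no indication of how to reduce to that situation. Likewise, an induction on $k$ would require producing, from the finiteness of $H^k(N,\F_p)$ alone, an intermediate normal subgroup satisfying the analogous top-degree finiteness in lower dimension, and this is not automatic. In short, your proposal correctly isolates the architecture of the proof but leaves precisely the hard step---the one that is the point of \cite{WZ}---unresolved.
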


We shall often used the following well-known Schur's theorem (see Theorem 8.19 \cite{K} for example).  

\begin{lemma}\label{shur} Let $G$ be a group having  center of finite index. The commutator $[G,G]$ is finite.

\end{lemma}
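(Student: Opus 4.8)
The plan is to prove this via the transfer (Verlagerung) homomorphism together with two standard facts about finitely generated abstract groups. Write $Z = \Zen(G)$ and $n = [G:Z] < \infty$. First I would observe that the value of a commutator $[x,y] = x^{-1}y^{-1}xy$ depends only on the cosets $xZ$ and $yZ$, since central factors cancel; consequently $G$ has at most $n^2$ distinct commutator values. As $[y,x] = [x,y]^{-1}$, the subgroup $[G,G]$ is generated by this finite set of commutators, and hence $[G,G]$ is finitely generated.

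Next I would pass to the central subgroup $A := [G,G]\cap Z$. Since $[\,[G,G]:A\,] \le [G:Z] = n$ is finite and $[G,G]$ is finitely generated, the standard fact that a finite-index subgroup of a finitely generated group is again finitely generated shows that $A$ is a finitely generated abelian group.

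The key step is to show that $A$ is torsion; for this I would invoke the transfer homomorphism $V\colon G \to Z$, which is well defined and a homomorphism because $Z$ is abelian (so it coincides with its own abelianisation). Two standard properties of $V$ are needed: it factors through $G/[G,G]$, so that $[G,G] \subseteq \ker V$; and its restriction to the central subgroup $Z$ is the $n$-th power map $z \mapsto z^{n}$, which follows by computing $V$ directly from the definition using a transversal, the centrality of $z$ making every conjugation term trivial. Combining these, each $a \in A = [G,G]\cap Z$ satisfies $a^{n} = V(a) = 1$, so $A$ has exponent dividing $n$. A finitely generated abelian group of finite exponent is finite, hence $A$ is finite; since $[\,[G,G]:A\,] \le n$, we conclude that $[G,G]$ is finite.

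The only non-formal point — and thus the main obstacle — is verifying that the transfer restricts to the $n$-th power map on the centre; everything else is bookkeeping with standard facts about finitely generated groups. Alternatively, one could avoid the transfer entirely by first showing directly that every commutator has finite order and then applying Dietzmann's lemma to the finite, conjugation-closed set of commutator values, but the transfer argument is the shorter route.
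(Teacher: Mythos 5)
Your proof is correct and complete: the reduction to $A=[G,G]\cap Z$ being a finitely generated abelian group of exponent dividing $n$, via the facts that there are only finitely many commutator values and that the transfer $V\colon G\to \Zen(G)$ kills $[G,G]$ while restricting to the $n$-th power map on the centre, is the classical transfer proof of Schur's theorem. The paper itself gives no argument here — it simply cites the literature for this well-known lemma — so your write-up supplies a standard proof that matches what the cited source would provide; no gaps.
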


The following is the pro-$p$ analogue of \cite[Theorem~8.8]{Bieri}.
\begin{corollary}\label{cor:center}
Let $G$ be a  pro-$p$ group of finite cohomological dimension $n$. If the center $\mathcal{Z}(G)$ has cohomological dimension  $n$, then $G$ is abelian.
\end{corollary}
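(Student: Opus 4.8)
The plan is to deduce that $G/\mathcal{Z}(G)$ is finite via Proposition~\ref{prop:WZ}, and then to conclude with Schur's theorem, exactly as in the structure of Bieri's discrete argument. Write $Z=\mathcal{Z}(G)$ and assume $n\geq 1$ (for $n=0$ Proposition~\ref{prop:cd}(1) forces $G$ to be trivial). First I would record that $G$, and hence its subgroup $Z$, is torsion-free: a nontrivial torsion element of a pro-$p$ group has order a power of $p$, hence yields a closed subgroup isomorphic to $\Z/p$, whose cohomological dimension is infinite, contradicting $\ccd(G)=n<\infty$ together with the monotonicity of $\ccd$ under closed subgroups.

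Next I would identify $Z$ as $\Z_p^n$. If $a_1,\dots,a_m\in Z$ have $\F_p$-linearly independent images in $Z/pZ$, then the closed subgroup they generate is a topologically finitely generated torsion-free abelian pro-$p$ group, so it is isomorphic to $\Z_p^m$ (the images of the $a_i$ remain independent mod $p$ there); hence $m=\ccd(\Z_p^m)\leq\ccd(Z)=n$. Thus $\dim_{\F_p}Z/pZ\leq n$, so $Z$ is topologically finitely generated, and being torsion-free abelian it is isomorphic to $\Z_p^m$ with $m=\ccd(Z)=n$. In particular $H^n(Z,\F_p)\cong\F_p$ is finite.

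Now I would apply Proposition~\ref{prop:WZ} to the closed normal subgroup $Z\trianglelefteq G$ with $k=\ccd(Z)=n$: it yields $\vcd(G/Z)=n-n=0$. Hence $G/Z$ has an open subgroup of cohomological dimension $0$; since a nontrivial pro-$p$ group $Q$ satisfies $H^1(Q,\F_p)\neq 0$, that open subgroup is trivial, so $G/Z$ is finite. Thus $Z=\mathcal{Z}(G)$ has finite index in $G$, and Schur's theorem (Lemma~\ref{shur}) gives that $[G,G]$ is finite. Being a subgroup of the torsion-free group $G$, the commutator subgroup $[G,G]$ is trivial, so $G$ is abelian.

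I expect the only step requiring genuine care to be the verification that $H^n(Z,\F_p)$ is finite, i.e.\ the identification $Z\cong\Z_p^n$, which is precisely what licenses the use of Proposition~\ref{prop:WZ}; the remaining steps are routine applications of the cited results.
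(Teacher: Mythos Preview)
Your proof is correct and follows the paper's argument essentially line for line: the paper identifies $\mathcal{Z}(G)\cong\Z_p^n$ (citing \cite[Theorem~4.3.4]{RZ} rather than arguing it out), applies Proposition~\ref{prop:WZ} to conclude $[G:\mathcal{Z}(G)]<\infty$, invokes Lemma~\ref{shur}, and finishes by torsion-freeness. Your extra care in checking that $Z$ is topologically finitely generated and that $H^n(Z,\F_p)$ is finite simply makes explicit the hypothesis of Proposition~\ref{prop:WZ} that the paper leaves to the cited reference.
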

\begin{proof} Since $\mathcal{Z}(G)$ is a torsion-free abelian pro-$p$ group, $\mathcal{Z}(G)$ is a free abelian pro-$p$ group of rank $= n$ (see \cite[Theorem~4.3.4]{RZ}). By Proposition~\ref{prop:WZ}, $\mathcal{Z}(G)$ has finite index in $G$ and so the commutator $\overline{[G,G]}$ is finite by Lemma \ref{shur}. Since $G$ is torsion-free,  $\overline{[G,G]}=1$ and $G$ is abelian.
\end{proof}
\subsection{Pro-$p$ $	\PD^n$-groups}
The pro-$p$ group $G$ is called a {\em pro-$p$ Poincar\'e duality group of dimension $n$} (or pro-$p$ $\PD^n$-group, for short) if the following properties are satisfied:
\begin{enumerate}
\item[(D1)] $\ccd(G)=n$,
\item[(D2)] $|H^k(G,\F_p)|$ is finite for all $k\leq n$,
\item[(D3)] $H^k(G,\F_p[[G]])=0$ for $1\leq k\leq n-1$, and
\item[(D4)] $H^n(G,\F_p[[G]])\cong\F_p.$
\end{enumerate}
All pro-$p$ $\PD^1$-groups are  infinite cyclic and all pro-$p$ $\PD^2$-groups  are Demushkin.
The Demushkin groups $D$ are one-relator pro-$p$ groups of cohomological  dimension 2 such that $H^2(D,\F_p)\cong\F_p$ and the cup-product
$$\cup\colon H^1(D,\F_p)\times H^1(D,\F_p)\to H^2(D,\F_p)$$
is a non-singular bilinear form.

We end the preliminaries by collecting several well-known results on pro-$p$ $\PD^n$-groups that will be used further on.
\begin{proposition}[\protect{\cite[Proposition~4.4.1]{SW}}]\label{prop:open}
Let $G$ be a pro-$p$ group of finite cohomological dimension $n$ and $U$  an open subgroup. Then $G$ is a pro-$p$ $\PD^n$-group if and only if $U$ is a pro-$p$ $\PD^n$-group.
\end{proposition}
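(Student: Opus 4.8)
The plan is to verify the four defining conditions (D1)--(D4) one at a time, exploiting throughout that, decomposing $G$ into right cosets of $U$, the completed group ring $\F_p[[G]]$ is a free $\F_p[[U]]$-module of finite rank $[G:U]$. Two consequences of this will be used repeatedly. First, since $[G:U]$ is finite, induction and coinduction from $U$ to $G$ coincide; in particular $\F_p[[G]]\cong\mathrm{coind}_U^G\F_p[[U]]$ and $\F_p[G/U]\cong\mathrm{coind}_U^G\F_p$, so Shapiro's lemma gives $H^k(G,\F_p[[G]])\cong H^k(U,\F_p[[U]])$ and $H^k(U,\F_p)\cong H^k(G,\F_p[G/U])$ for every $k$. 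Second, restriction of modules along $\F_p[[U]]\hookrightarrow\F_p[[G]]$ is exact, carries finitely generated free modules to finitely generated free modules, and detects finite generation (a module finitely generated over $\F_p[[U]]$ is a fortiori finitely generated over $\F_p[[G]]$, and conversely).

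Conditions (D1), (D3) and (D4) are then immediate. A pro-$p$ group of finite cohomological dimension is torsion free (a finite $p$-subgroup would have infinite cohomological dimension), so $G$ is; by Proposition~\ref{prop:serrefree} together with the definition of $\vcd$ as an infimum of $\ccd(H)$ over $H\leq_o G$, this forces $\ccd(U)=\ccd(G)=n$, so (D1) holds for both. Since the numerical ranges $1\le k\le n-1$ and $k=n$ are the same for $G$ and $U$, the first Shapiro isomorphism above shows that (D3) (resp.\ (D4)) holds for $G$ exactly when it holds for $U$.

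For (D2), the direction $G\Rightarrow U$ is also routine: the second Shapiro isomorphism reduces finiteness of $H^k(U,\F_p)$ to finiteness of $H^k(G,\F_p[G/U])$, and $\F_p[G/U]$ is a finite $\F_p[[G]]$-module; as $G$ is pro-$p$ its only simple discrete module is the trivial module $\F_p$, so $\F_p[G/U]$ admits a finite filtration with all quotients $\F_p$, and the long exact cohomology sequences together with finiteness of $H^j(G,\F_p)$ for $j\le n$ yield the claim.

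The one genuinely delicate point is the remaining direction of (D2), where finiteness must be transferred \emph{upward} from $U$ to $G$; here I would run the classical argument that type $\FP_n$ passes to finite overgroups (recalling that for a pro-$p$ group, finiteness of $H^k(-,\F_p)$ for all $k\le n$ is equivalent to being of type $\FP_n$ over $\F_p$). Assuming $U$ is $\FP_n$, one builds a partial resolution $F_{n-1}\to\cdots\to F_0\to\F_p\to 0$ of the trivial module by finitely generated free $\F_p[[G]]$-modules one step at a time: at each stage the current syzygy $K_j$ (for $j\le n$) restricts to the $j$-th syzygy of $\F_p$ in a finitely generated free $\F_p[[U]]$-resolution, hence is finitely generated over $\F_p[[U]]$ by Schanuel's lemma and the hypothesis on $U$, hence is finitely generated over $\F_p[[G]]$ by the detection property, so the construction continues; thus $G$ is $\FP_n$ over $\F_p$ and (D2) holds for $G$. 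Everything but this last step is Shapiro plus dévissage; the crux is that pushing $\FP_n$ up the finite-index inclusion genuinely requires $\F_p[[G]]$ to be finitely generated \emph{and} free over $\F_p[[U]]$, and the Schanuel/restriction bookkeeping is where care is needed.
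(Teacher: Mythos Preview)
The paper does not actually give a proof of this proposition: it is quoted verbatim from \cite[Proposition~4.4.1]{SW} and used as a black box, so there is no in-paper argument to compare against. Your write-up is therefore being measured against the standard proof, and it is essentially that proof. The handling of (D1) via Serre's theorem, of (D3)--(D4) via the Shapiro isomorphism $H^k(G,\F_p[[G]])\cong H^k(U,\F_p[[U]])$, and of the downward direction of (D2) by d\'evissage of $\F_p[G/U]$ are all correct and routine.

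The only point worth a comment is your upward step for (D2). The argument is right, but the phrase ``the current syzygy $K_j$ \dots\ restricts to the $j$-th syzygy of $\F_p$ in a finitely generated free $\F_p[[U]]$-resolution'' slightly undersells what is happening: restriction of your partial $\F_p[[G]]$-resolution \emph{is} itself a finitely generated free $\F_p[[U]]$-resolution (freeness and finite generation survive because $\F_p[[G]]$ is free of rank $[G:U]$ over $\F_p[[U]]$), and Schanuel then compares its $j$-th kernel with that of any resolution witnessing $\FP_n$ for $U$. You say exactly this in the closing sentence, so the logic is sound; just make sure the inductive hypothesis is stated as ``$K_{j-1}$ is finitely generated over $\F_p[[G]]$'' so that the next free module $F_j$ can indeed be taken finitely generated. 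With that bookkeeping explicit, the proof is complete and matches the approach one finds in the cited source.
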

Note that the previous result is stated for pro-$p$ groups but it holds in general for profinite groups.
\begin{proposition}[\protect{\cite[I.\S4, Exercise~5(b)]{Serre}}]\label{prop:serre}
A subgroup of infinite index in a pro-$p$ Poincar\'e duality group of dimension $n$ has cohomological dimension at most $n-1$.
\end{proposition}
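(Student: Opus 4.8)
The plan is to deduce the statement from Poincar\'e duality for $G$ by analysing the top-degree cohomology of the subgroup via Shapiro's lemma.

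Let $H\le G$ be closed of infinite index, $G$ a pro-$p$ $\PD^n$-group. Since cohomological dimension is monotone under passing to closed subgroups, $\ccd(H)\le\ccd(G)=n$, so by Proposition~\ref{prop:cd}(1) applied to $H$ (with $n$ replaced by $n-1$) it suffices to prove $H^n(H,\F_p)=0$. By Shapiro's lemma for profinite cohomology, $H^n(H,\F_p)\cong H^n(G,\coind_H^G\F_p)$, where $\coind_H^G\F_p$ is the discrete $\F_p[[G]]$-module of locally constant $\F_p$-valued functions on the profinite coset space $H\backslash G$, with $G$ acting by right translation; concretely $\coind_H^G\F_p=\varinjlim_U\F_p[H\backslash G/U]$ over $U\trianglelefteq_o G$, the transition maps being inflation of functions. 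On the other hand, the mod-$p$ dualizing module of the pro-$p$ group $G$ is $\F_p$ with trivial $G$-action: the $G$-action on $H^n(G,\F_p[[G]])\cong\F_p$ coming from (D4) factors through a character $G\to\F_p^\times$, which is trivial since $G$ is pro-$p$ and $|\F_p^\times|=p-1$ is prime to $p$. Hence Poincar\'e duality gives a natural isomorphism $H^n(G,M)\cong M_G$ for all discrete $\F_p[[G]]$-modules $M$, and therefore $H^n(H,\F_p)\cong(\coind_H^G\F_p)_G\cong\varinjlim_U\big(\F_p[H\backslash G/U]\big)_G$.

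It remains to see that this colimit vanishes when $[G:H]=\infty$. For each $U$ the group $G$ acts transitively on $H\backslash G/U$, so $\big(\F_p[H\backslash G/U]\big)_G\cong\F_p$, and the transition map into $\big(\F_p[H\backslash G/V]\big)_G\cong\F_p$ for $V\trianglelefteq_o G$ with $V\le U$ is multiplication by the index $[HU:HV]$, which is a power of $p$. As $H$ is closed of infinite index it is not open, and $\bigcap_U HU=H$ then forces, for every $U$, the existence of some $V\le U$ with $HV\subsetneq HU$, hence with $[HU:HV]\ge p$ and transition map zero modulo $p$. So every element of the directed system is eventually killed, $\varinjlim_U\F_p=0$, thus $H^n(H,\F_p)=0$ and $\ccd(H)\le n-1$. (When $H$ is open the $HU$ stabilise at $H$, the transition maps become the identity, and one recovers $H^n(H,\F_p)\cong\F_p$, consistently with $H$ being a pro-$p$ $\PD^n$-group by Proposition~\ref{prop:open}.)

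The only delicate part is the interplay of the last two steps: one needs that the mod-$p$ dualizing module of $G$ is trivial, so that $H^n(G,-)$ becomes the coinvariants functor, and then one must keep precise track of the transition maps in the colimit computing $\big(\coind_H^G\F_p\big)_G$. It is exactly the $p$-power indices $[HU:HV]$ occurring there — forced to be nontrivial cofinally because $[G:H]=\infty$ — that kill the colimit; the remaining ingredients (monotonicity of $\ccd$ on closed subgroups, Shapiro's lemma, right-exactness of $H^n$) are routine.
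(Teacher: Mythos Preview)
Your argument is correct. The paper itself does not prove this proposition; it merely records it as a known fact, citing Serre's exercise. So there is no ``paper's own proof'' to compare against.

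That said, what you wrote is essentially the intended solution to that exercise. The three ingredients are exactly right: Shapiro's lemma identifies $H^n(H,\F_p)$ with $H^n(G,\coind_H^G\F_p)$; orientability of pro-$p$ $\PD^n$-groups (the action on $H^n(G,\F_p[[G]])\cong\F_p$ factors through $\F_p^\times$, hence is trivial) reduces top-degree cohomology to coinvariants; and the explicit colimit computation of $(\coind_H^G\F_p)_G$ over open normal $U$, with transition maps given by multiplication by $[HU:HV]$, vanishes precisely when $H$ is not open. Your verification that $\bigcap_U HU=H$ forces these indices to be eventually divisible by $p$ is clean, and the parenthetical sanity check for open $H$ is a nice touch.

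One small remark on exposition: the passage from ``mod-$p$ dualizing module trivial'' to ``$H^n(G,M)\cong M_G$ naturally'' deserves one more sentence if you want the proof to be fully self-contained. The cleanest route is to note that $H^n(G,-)$ is right exact (as $\ccd(G)=n$) and commutes with filtered colimits on discrete $\F_p[[G]]$-modules, so it is determined by its values on the permutation modules $\F_p[G/U]$; on these Shapiro and Proposition~\ref{prop:open} give $H^n(G,\F_p[G/U])\cong H^n(U,\F_p)\cong\F_p\cong\F_p[G/U]_G$, and triviality of the orientation character is exactly what makes this identification natural in $U$. But this is standard and your appeal to ``Poincar\'e duality'' is an acceptable shorthand.
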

\begin{proposition}[\protect{\cite[Corollary~1.5]{JZ}}]\label{JZ}
Let $G$ be a 
pro-$p$ $\PD^3$-group and $N$ a non-trivial finitely generated
normal subgroup of $G$ of infinite index. Then either $N$ is infinite cyclic and $G/N$ is virtually Demushkin
or $N$ is Demushkin and $G/N$ is virtually infinite cyclic.
\end{proposition}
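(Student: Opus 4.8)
The plan is to classify $N$ by its cohomological dimension and to transfer the Poincar\'e duality of $G$ to both $N$ and the quotient $Q=G/N$. Write $k=\ccd(N)$. Since $N\neq1$ we have $k\geq1$, and since $N$ has infinite index Proposition~\ref{prop:serre} gives $k\leq2$; thus $k\in\{1,2\}$, and the two values will produce the two alternatives in the statement. In each case the goal is to show that $N$ is a pro-$p$ $\PD^{k}$-group and that $Q$ is virtually a pro-$p$ $\PD^{3-k}$-group. Granting this, the identifications recorded in the preliminaries finish the argument: a pro-$p$ $\PD^{1}$-group is $\Z_p$ and a pro-$p$ $\PD^{2}$-group is Demushkin, so $k=1$ yields $N\cong\Z_p$ with $Q$ virtually Demushkin, while $k=2$ yields $N$ Demushkin with $Q$ virtually $\Z_p$, i.e.\ virtually infinite cyclic.

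The engine is the pro-$p$ analogue of the Bieri--Eckmann duality argument for group extensions (compare the discrete prototype in \cite{Bieri}). Since $\F_p[[G]]$ is free as a profinite left $\F_p[[N]]$-module (with ``basis'' a continuous section of $Q$), there is a Lyndon--Hochschild--Serre spectral sequence
$$E_2^{ij}=H^i\!\left(Q,H^j(N,\F_p[[G]])\right)\Longrightarrow H^{i+j}(G,\F_p[[G]]),$$
in which $H^j(N,\F_p[[G]])$ is assembled from $H^j(N,\F_p[[N]])$ along the coset space $Q$; in particular it is concentrated in the range $0\leq j\leq k$, and its degree-$0$ term $H^0(N,\F_p[[G]])=\F_p[[G]]^N$ vanishes because $N$ is infinite. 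Feeding the values (D3)--(D4) for $G$ (that is, $H^m(G,\F_p[[G]])=0$ for $m\neq3$ and $\F_p$ for $m=3$) into the abutment forces the dualizing modules $H^j(N,\F_p[[N]])$ to be concentrated in the single degree $j=k$ with value $\F_p$; this is precisely (D3)--(D4) for $N$, so $N$ is a pro-$p$ $\PD^{k}$-group. Once this concentration is in hand the spectral sequence collapses onto the row $j=k$, giving $H^{m}(Q,\F_p[[Q]])\cong H^{m+k}(G,\F_p[[G]])$ up to the orientation twist coming from the conjugation action of $Q$ on $H^{k}(N,\F_p[[N]])\cong\F_p$ (a twist that does not affect these groups); reading off the right-hand side yields (D3)--(D4) for $Q$ in dimension $3-k$.

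To make the quotient statement clean I would first replace $Q$ by an open subgroup $Q_0$ with $\ccd(Q_0)=3-k$ furnished by Proposition~\ref{prop:WZ}, and $G$ by the preimage $G_0$ of $Q_0$, which is again a pro-$p$ $\PD^3$-group by Proposition~\ref{prop:open}; running the argument for the extension $N\trianglelefteq G_0$ then shows that $Q_0$ is a genuine pro-$p$ $\PD^{3-k}$-group, so $Q$ is virtually $\PD^{3-k}$ as required. For $k=1$ the subgroup conclusion needs no separate discussion of rank: the transfer above identifies $N$ as a $\PD^1$-group, hence $N\cong\Z_p$ by the quoted classification (and Proposition~\ref{prop:cd}(3) pins down the rank-one free case), which automatically excludes free pro-$p$ groups of rank $\geq2$.

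The main obstacle is the finiteness bookkeeping when $k=2$. Both Proposition~\ref{prop:WZ} and the spectral-sequence transfer presuppose that $H^{k}(N,\F_p)$ is finite, i.e.\ that $N$ is of type $\FP_2$, whereas the hypothesis only supplies finite generation; for $k=1$ a finitely generated free pro-$p$ group is automatically $\FP_\infty$, but for $k=2$ one must bootstrap the finiteness of $H^2(N,\F_p)$ (and, likewise, verify the finiteness axiom (D2) for $N$ and for $Q_0$) from the duality of the ambient group $G$ before the machine can be applied. I would obtain it from the finiteness condition (D2) for $G$ together with the low-degree terms of the $\F_p$-coefficient spectral sequence $H^i(Q,H^j(N,\F_p))\Rightarrow H^{i+j}(G,\F_p)$, using that $\vcd(Q)\leq1$ confines the columns $H^i(Q,-)$ to $i\leq1$ on an open subgroup. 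Once $N$ is known to be $\FP_2$, the duality-transfer argument upgrades it to a $\PD^2$-group, hence Demushkin, and completes the case.
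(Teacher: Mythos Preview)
First, note that the paper does not prove this proposition at all: it is simply quoted from \cite[Corollary~1.5]{JZ}, so there is no in-paper argument to compare against. Your outline is the natural Bieri--Eckmann transfer of duality along the extension $1\to N\to G\to Q\to 1$, and for $k=\ccd(N)=1$ it is sound: a finitely generated free pro-$p$ group is of type $\FP_\infty$, so Proposition~\ref{prop:WZ} applies directly and the spectral sequence collapses as you describe. The argument actually given in \cite{JZ} is organised rather differently, around deficiency and Euler-characteristic inequalities in the spirit of \cite{HS}; the $\PD^3$ hypothesis enters there through $\dim_{\F_p}H^1(G,\F_p)=\dim_{\F_p}H^2(G,\F_p)$, i.e.\ $\mathrm{def}(G)=0$.

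There is, however, a genuine gap in your treatment of $k=2$. You propose to bootstrap the finiteness of $H^2(N,\F_p)$ from the $\F_p$-coefficient Lyndon--Hochschild--Serre spectral sequence ``using that $\vcd(Q)\leq 1$ confines the columns $H^i(Q,-)$ to $i\leq 1$ on an open subgroup''. But the only tool you have cited for the bound $\vcd(Q)\leq 1$ is Proposition~\ref{prop:WZ}, and its hypothesis is precisely that $|H^2(N,\F_p)|<\infty$; so the argument is circular as written. Nothing else in your sketch controls $\ccd(Q)$ from above --- the general inequality $\ccd(G)\leq\ccd(N)+\ccd(Q)$ points the wrong way, and your $\F_p[[G]]$-coefficient spectral sequence likewise needs $N$ of type $\FP_2$ before the identification of $H^j(N,\F_p[[G]])$ with $H^j(N,\F_p[[N]])\mathbin{\widehat{\otimes}}\F_p[[Q]]$ (cohomology commuting with the completed product) is available. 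This is exactly where \cite{JZ} does real work: one first shows, by a deficiency argument run over a cofinal system of open subgroups of $G$ (exploiting the multiplicativity of $\mathrm{def}-1$ under passage to finite index), that a finitely generated normal subgroup of infinite index in a pro-$p$ group with $\mathrm{def}(G)\geq 0$ satisfies $\dim_{\F_p}H^2(N,\F_p)\leq\dim_{\F_p}H^1(N,\F_p)<\infty$; only once that Euler-characteristic bound is in hand does the duality transfer you sketch become legitimate.
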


We shall need also the following

\begin{proposition}\label{prop:norm} Let $H\neq1$ be a finitely presented subgroup of a pro-$p$ $\PD^3$-group $G$. If $\ccd(H)=\ccd(N_G(H))$, then $H$ is open in $N_G(H)$.
\end{proposition}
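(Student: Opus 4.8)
The plan is to put $N=N_G(H)$ and $k=\ccd(H)=\ccd(N)$, and to show that $N/H$ is finite, which is exactly the statement that $H$ is open in $N_G(H)$. First note that $k$ is finite, since $N\leq G$ forces $\ccd(N)\leq\ccd(G)=3$, and that $k\geq1$, since $\ccd(H)=0$ would give $H=1$ contrary to hypothesis. So $k\in\{1,2,3\}$, and I would split into these three cases.

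The case $k=3$ is immediate from Proposition~\ref{prop:serre}: a subgroup of infinite index in the pro-$p$ $\PD^3$-group $G$ has cohomological dimension at most $2$, so $\ccd(H)=3$ forces $[G:H]<\infty$, whence $[N:H]<\infty$ and $H$ is open in $N$. For $k\in\{1,2\}$ I would instead apply Proposition~\ref{prop:WZ} to the pair $H\trianglelefteq N$: provided $|H^k(H,\F_p)|$ is finite, it gives $\vcd(N/H)=\ccd(N)-\ccd(H)=0$. Since $\vcd(N/H)=0$ means the pro-$p$ group $N/H$ has an open subgroup of cohomological dimension $0$, i.e.\ a trivial open subgroup, $N/H$ must be finite; hence $[N:H]<\infty$ and $H$ is open in $N$, as wanted.

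The one point that requires checking --- and the only place where the hypothesis that $H$ be finitely presented (rather than merely finitely generated) is used --- is the finiteness of $|H^k(H,\F_p)|$ when $k\in\{1,2\}$. For $k=1$, Proposition~\ref{prop:cd}(3) identifies $H$ with a free pro-$p$ group, of finite rank since $H$ is finitely generated, so $H^1(H,\F_p)$ is finite. For $k=2$, the finiteness of $H^2(H,\F_p)$ is the standard fact that a finitely presented pro-$p$ group has $\dim_{\F_p}H^2(H,\F_p)$ bounded by the number of defining relations of a pro-$p$ presentation.

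I do not anticipate a real obstacle: the content is the clean case analysis on $k=\ccd(H)$ together with the careful verification that the hypotheses of Proposition~\ref{prop:WZ} --- finite cohomological dimension of the ambient group $N$, and finiteness of the top $\F_p$-cohomology of the normal subgroup $H$ --- are genuinely met, the latter being precisely where finite presentation is invoked.
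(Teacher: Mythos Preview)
Your proposal is correct and follows essentially the same route as the paper: the case $\ccd(H)=3$ is handled by Proposition~\ref{prop:serre}, and the cases $\ccd(H)\in\{1,2\}$ by applying Proposition~\ref{prop:WZ} to the pair $H\trianglelefteq N_G(H)$ to conclude $\vcd(N_G(H)/H)=0$. The paper's proof is terser and does not spell out the verification that $|H^k(H,\F_p)|<\infty$, but your explicit check of this hypothesis (and of why finite presentation is needed when $k=2$) is exactly the content being invoked.
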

\begin{proof}
For $\ccd(H)=3$ the result follows by Proposition~\ref{prop:serre}. If $\ccd(H)<3$, then $H$ is open in $N_G(H)$ by Proposition~\ref{prop:WZ}. 
\end{proof}

\section{Polycyclic pro-$p$ groups}
A pro-$p$ group $G$ is called {\em polycyclic} if there is a finite series of closed normal subgroups of $G$ 
\begin{equation}\label{eq:series}
1=G_0\trianglelefteq G_1\trianglelefteq\cdots \trianglelefteq G_n=G
\end{equation}
such that each factor group $G_i/G_{i-1}$ is cyclic for $i=1,\ldots,n$. 

Let $G$ be a polycyclic pro-$p$ group. The {\em Hirsch length} of $G$, denoted by  $h(G)$, is the number of factors $G_i/G_{i-1}$ in the series \eqref{eq:series} that are isomorphic to $\Z_p$. Clearly, this number is independent on the choice of the series.

Many results on abstract polycyclic groups find their analogue in the  pro-$p$ world and the proofs often carry over up to minimal adjustments (see \cite{Segal} for the abstract case). 
\begin{lemma}\label{lem:poly} Let $G$ be a pro-$p$ group, $H$  a finitely generated normal subgroup of $G$ and $K\leq_o H \trianglelefteq G$. Then the normal core $K^G$ is open in $K$.
\end{lemma}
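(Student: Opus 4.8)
The plan is to prove the apparently stronger statement that $K^G$ is open in $H$; since $K^G\leq K\leq_o H$, this immediately gives that $K^G$ is open in $K$. Write $n=[H:K]$, which is finite because $K$ is an open subgroup of the profinite group $H$. The whole point will be that $K^G=\bigcap_{g\in G}K^g$ is in fact a \emph{finite} intersection of open subgroups of $H$.

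The first key point uses that $H$ is topologically finitely generated: such a profinite group has only finitely many open subgroups of any fixed finite index $n$. Indeed, an open subgroup $K'\leq H$ of index $n$ yields the action of $H$ on the coset space $H/K'$, i.e.\ a continuous homomorphism $H\to S_n$ with $K'$ recovered as a point stabiliser; and a continuous homomorphism from $H$ to the finite group $S_n$ is determined by the images of a finite generating set, so there are at most $(n!)^{d}$ of them, where $d$ is the number of generators. Hence the set $\Omega$ of open subgroups of $H$ of index $n$ is finite. Because $H\trianglelefteq G$, conjugation by an element $g\in G$ restricts to a topological automorphism of $H$, which preserves the index of subgroups; so $G$ acts on the finite set $\Omega$ by $K'\mapsto (K')^{g}$, and in particular the orbit of $K$ is finite, say $\{K^{g_1},\dots,K^{g_m}\}$ (equivalently, $N_G(K)$ is open in $G$ and the distinct conjugates of $K$ are indexed by the finitely many cosets of $N_G(K)$).

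Putting this together, one has
\[
K^G \;=\; \bigcap_{g\in G} K^{g} \;=\; \bigcap_{i=1}^{m} K^{g_i},
\]
since every conjugate $K^{g}$ equals one of the $K^{g_i}$. Each $K^{g_i}$ is the image of the open subgroup $K$ of $H$ under the topological automorphism $x\mapsto g_i^{-1}xg_i$ of $H$, hence is open in $H$; therefore $K^G$, being a finite intersection of open subgroups of $H$, is open in $H$, and a fortiori open in $K$. As $K^G$ is by definition normal in $G$ and contained in $K$, this is exactly the assertion of the lemma.

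The argument is short and I do not expect a serious obstacle. The two points that need to be stated carefully are: (i) the fact that a topologically finitely generated profinite group has only finitely many open subgroups of a given finite index (this is where finite generation of $H$ is genuinely used); and (ii) the use of normality of $H$ in $G$ to ensure that conjugation maps open subgroups of $H$ to open subgroups of $H$ of the same index, so that the $G$-action on $\Omega$ — and hence the finiteness of the orbit of $K$ — makes sense.
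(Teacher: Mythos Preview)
Your proof is correct and follows essentially the same approach as the paper: both arguments use finite generation of $H$ to bound the number of conjugates $K^g$ (the paper first replaces $K$ by $K^H$ and counts epimorphisms $H\to H/K$, whereas you count index-$n$ subgroups via homomorphisms $H\to S_n$), and then conclude that $K^G$ is a finite intersection of open subgroups of $H$. The only cosmetic difference is that the paper's preliminary reduction to $K\trianglelefteq_o H$ is unnecessary in your version, since the $S_n$ argument handles arbitrary open subgroups of a given index directly.
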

\begin{proof} Replacing $K$ by its normal core $K^H$ in $H$, we may assume $K\trianglelefteq_o H$. If $g\in G$, then $K^g\trianglelefteq_o H$ and $H/K^g\cong H/K$. There are finitely many morphisms of $H$ onto $H/K$, since there are at most $|H/K|$ possible images for each of the finitely many generators of $H$. Therefore, there are finitely many distinct groups among the $K^g$ as $g$ runs through $G$; call them $K_1,\ldots,K_n$. Clearly, $K^G=K_1\cap\ldots\cap K_n$ is open in $K$ and normal in $G$.
\end{proof}
\begin{remark}\label{rem:norm} The proof above also shows that the normalizer $N_G(K)$ has finite index in $G$ whenever $H$ is  finitely generated  and $K\trianglelefteq_o H \trianglelefteq G$ for some pro-$p$ group $G$. 
\end{remark}
\begin{proposition}\label{prop:poly}
Every polycyclic pro-$p$ group $G$ with $h(G)\geq 1$ is  poly-$\Z_p$ by finite.
\end{proposition}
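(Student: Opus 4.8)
The plan is to argue by induction on the Hirsch length $h(G)$, closely mirroring the abstract proof that a polycyclic group is virtually poly-infinite-cyclic. The base case $h(G)=1$ is essentially immediate: a polycyclic pro-$p$ group with Hirsch length $1$ has a finite normal subgroup $N$ (take $N$ to be the product of the finite factors at the bottom of a suitable refinement of the series) with $G/N$ a pro-$p$ group of Hirsch length $1$; a torsion-free polycyclic pro-$p$ group of Hirsch length $1$ must be $\Z_p$, so $G/N$ is virtually $\Z_p$, and since $N$ is finite, $G$ is poly-$\Z_p$ by finite.

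For the inductive step, I would first reduce to the case where $G$ has a nontrivial torsion-free closed abelian normal subgroup. Start with the series \eqref{eq:series} and let $G_1$ be the bottom term, which is cyclic. If $G_1$ is finite, pass to $G/G_1$, which is polycyclic with the same Hirsch length; by induction $G/G_1$ is poly-$\Z_p$ by finite, hence so is $G$ since $G_1$ is finite. So we may assume $G_1\cong\Z_p$. Now $G_1$ need not be normal in $G$, but by Remark~\ref{rem:norm} applied with $H=G_1$ (note $G_1$ is finitely generated and normal in $G$, so actually $G_1\trianglelefteq G$ already here)—more precisely, since the $G$-action on $\Aut(\Z_p)$ has pro-$p$ image in $1+p\Z_p$ (or $1+4\Z_2$), there is an open subgroup $G^{(1)}\leq_o G$ centralizing $G_1$, so $G_1$ is a central $\Z_p$ in $G^{(1)}$. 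Then $G^{(1)}/G_1$ is polycyclic with $h(G^{(1)}/G_1)=h(G)-1$, so by induction it contains an open subgroup $V/G_1$ that is poly-$\Z_p$. Lifting, $V\leq_o G^{(1)}\leq_o G$ is an extension of a poly-$\Z_p$ group by the central $\Z_p = G_1$, hence poly-$\Z_p$ itself, giving the conclusion for $G$.

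The main obstacle I anticipate is the control of the conjugation action on the cyclic $\Z_p$-factors: one needs that after passing to an open subgroup the action becomes trivial (so that a central — or at least normal torsion-free — subgroup $\Z_p$ survives), and one must check this can be done finitely many times without the index blowing up uncontrollably, which is fine since each step only multiplies the index by a bounded amount. A secondary subtlety is that the series \eqref{eq:series} has closed \emph{normal} subgroups but the induction naturally produces open subgroups, so one must be careful that "poly-$\Z_p$ by finite" is preserved under the two operations used: passing to an open subgroup (clear, since an open subgroup of poly-$\Z_p$ by finite is again such, being virtually torsion-free abelian-by-\ldots\ by an easy induction) and forming extensions by a central $\Z_p$ (also clear). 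Once these bookkeeping points are handled, the argument is routine, and this is exactly the kind of result the authors flag as carrying over from \cite{Segal} with minimal adjustments.
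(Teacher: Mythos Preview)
Your induction scheme has a real gap. You induct on the Hirsch length, but when $G_1$ is finite you pass to $G/G_1$, which has the \emph{same} Hirsch length, so the inductive hypothesis does not apply. The accompanying claim ``hence so is $G$ since $G_1$ is finite''---i.e.\ that finite-by-(poly-$\Z_p$ by finite) implies poly-$\Z_p$ by finite---is not a bookkeeping triviality: pulling back an open normal poly-$\Z_p$ subgroup of $G/G_1$ gives an open normal $V\leq G$ that is only finite-by-poly-$\Z_p$, and extracting an open poly-$\Z_p$ subgroup from such a $V$ is essentially the proposition again for a group of the same Hirsch length. (Your base case $h=1$ hides the same issue: you need that finite-by-$\Z_p$ is virtually $\Z_p$, which is true via a splitting argument but not for the reason you indicate.)

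There is a second, independent error. The claim that ``there is an open subgroup $G^{(1)}\leq_o G$ centralizing $G_1$'' is false: the image of $G$ in $\Aut(\Z_p)$ does land in the pro-$p$ part $1+p\Z_p\cong\Z_p$, but this group is \emph{infinite}, and the image can be all of it (take $G=\Z_p\rtimes\Z_p$ with faithful action), so the centralizer need not be open. Fortunately centrality is irrelevant here: since in the paper's definition every $G_i$ is already normal in $G$, once $G_1\cong\Z_p$ you may simply quotient by it, apply induction to $G/G_1$ (now $h$ drops by one), pull back an open normal poly-$\Z_p$ subgroup $V/G_1$, and observe that $V$ is $\Z_p$-by-poly-$\Z_p$, hence poly-$\Z_p$---no centrality needed.

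So the only part of your argument that actually goes through is the case $G_1\cong\Z_p$, and there your bottom-up step is indeed slicker than the paper's. The entire difficulty lies in reducing to that case, and this is precisely what the paper's machinery is for. The paper inducts on the \emph{length} $n$ of the series from the top: assuming $G_{n-1}$ has an open normal poly-$\Z_p$ subgroup $N$, it invokes Lemma~\ref{lem:poly} (the normal core in $G$ of an open subgroup of a finitely generated normal subgroup remains open) to replace $N$ by $U=N^G\trianglelefteq G$, still open and poly-$\Z_p$, and then treats $G/G_{n-1}$ finite or $\cong\Z_p$ on top of $U$. That core lemma is exactly the ingredient your bottom-up approach is missing; if you try to repair your finite-$G_1$ step honestly, you will find yourself reproving it.
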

\begin{proof}
Suppose that $G$ has a finite series of subgroups $1=G_0\trianglelefteq G_1\trianglelefteq\cdots\trianglelefteq G_n=G$ with cyclic  factors $G_i/G_{i-1}$. We need to prove that $G$ has a poly-$\Z_p$ subgroup that is normal and open. If $n=1$ there is nothing to prove. For $n>1$ we proceed by induction. Suppose that $G_{n-1}$ has a poly-$\Z_p$ subgroup $N$ which is normal and open. If $G/G_{n-1}$ is finite, then the core $N^G$ is a poly-$\Z_p$ subgroup of $G$ that is normal and open.  If $G/G_{n-1}$ is infinite, we apply Lemma~\ref{lem:poly} with $N$ for $K$ and $G_{n-1}$ for $H$. There exists an open subgroup $U\trianglelefteq_o N\trianglelefteq_o G_{n-1}$ such that $U\trianglelefteq G$. In particular, $G/U$ is virtually $\Z_p$, i.e.,  there is a finite index subgroup $K/U$ of $G/U$  such that $K/U\cong\Z_p$.  Thus, $K$  is poly-$\Z_p$ because $U$ is poly-$\Z_p$ and $K/U\cong\Z_p$. 
\end{proof}
By a combination of the last paragraph of \cite{camina} and \cite[Theorem~1.3]{WilZ}, one obtain the following list of isomorphism types for torsion-free virtually abelian pro-$p$ groups.
\begin{proposition}\label{prop:polycyclic_abelian}
Every virtually abelian  pro-$p$ group  of cohomological dimension 3 has one of the following isomorphism types:
\begin{enumerate}
\item[(i)] for $p>3$, $\Z_p\times\Z_p\times\Z_p$;
\item[(ii)] for $p=3$, in addition to $\Z_3\times\Z_3\times\Z_3$, one has a torsion-free extension of $\Z_3\times\Z_3\times\Z_3$ by $C_3$;
\item[(iii)] for $p=2$, in addition to $\Z_2\times\Z_2\times\Z_2$, one has  a torsion-free extension of $\Z_2 \times \Z_2 \times \Z_2$ by one of the following finite 2-groups: $C_2, C_4, C_8, D_2, D_4, D_8, Q_{16}$.
\end{enumerate}
\end{proposition}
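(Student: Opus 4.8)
The plan is to reduce $G$ to a torsion-free extension of $\Z_p^3$ by a finite $p$-group and then appeal to the cited classifications. First, since $\ccd(G)=3$ is finite, $G$ is torsion-free: a nontrivial element of finite order would generate a closed subgroup isomorphic to a nontrivial finite cyclic $p$-group, which has infinite cohomological dimension, contradicting monotonicity of $\ccd$ under closed subgroups. As $G$ is virtually abelian, it contains an open normal abelian subgroup $A$ (the normal core of an open abelian one), and $A$ is torsion-free, hence $A\cong\Z_p^{d}$ for some $d\geq 1$ by \cite[Theorem~4.3.4]{RZ}. Since $A$ is a closed subgroup, $d=\ccd(A)\leq\ccd(G)=3$; since $A$ is open, $\vcd(G)\leq\ccd(A)=d$; and since $G$ is torsion-free, $\vcd(G)=\ccd(G)=3$ by Proposition~\ref{prop:serrefree}. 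Hence $d=3$ and $A\cong\Z_p^3$. Thus $Q:=G/A$ is a finite $p$-group, conjugation yields $\rho\colon Q\to\Aut(A)=\mathrm{GL}_3(\Z_p)$, and the isomorphism type of $G$ is determined by $\rho$ together with the class of the extension $1\to\Z_p^3\to G\to Q\to 1$, subject to $G$ being torsion-free.

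For $p>3$ this already forces $G\cong\Z_p^3$. Indeed, any $g\in\mathrm{GL}_3(\Z_p)$ of order $p$ is annihilated by $x^p-1=(x-1)\Phi_p(x)$ and, being $\neq 1$, has minimal polynomial divisible by $\Phi_p$; but $\Phi_p$ is irreducible over $\Q_p$ of degree $p-1\geq 4>3$, a contradiction. Hence the finite $p$-group $\image\rho\leq\mathrm{GL}_3(\Z_p)$ is trivial, so $A\leq\mathcal Z(G)$ and therefore $3=\ccd(A)\leq\ccd(\mathcal Z(G))\leq\ccd(G)=3$. By Corollary~\ref{cor:center}, $G$ is abelian, so $G=\mathcal Z(G)$ is a torsion-free abelian pro-$p$ group of cohomological dimension $3$, i.e. $G\cong\Z_p^3$. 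This is case (i).

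It remains to treat $p\in\{2,3\}$, where $\mathrm{GL}_3(\Z_p)$ does contain elements of order $p$ and $G$ need not be abelian. Here I would invoke \cite[Theorem~1.3]{WilZ} together with the last paragraph of \cite{camina}, which classify the finite $p$-groups $Q$ and the torsion-free extensions $1\to\Z_p^3\to G\to Q\to 1$ that can occur: for $p=3$ this yields, besides $\Z_3^3$, a torsion-free extension of $\Z_3^3$ by $C_3$ (case (ii)), and for $p=2$ it yields the extensions of $\Z_2^3$ by $C_2,C_4,C_8,D_2,D_4,D_8,Q_{16}$ of case (iii). The main obstacle is exactly this last step: for $p=2$ the principal congruence subgroup of $\mathrm{GL}_3(\Z_2)$ fails to be torsion-free, so one cannot read off the admissible quotients $Q$ from $\mathrm{GL}_3(\F_2)$, and a careful case analysis of the relevant crystallographic-type pro-$2$ extensions — carried out in the cited works — is required.
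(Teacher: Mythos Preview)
Your proposal is correct and matches the paper's approach: the paper gives no proof at all, merely attributing the result to ``a combination of the last paragraph of \cite{camina} and \cite[Theorem~1.3]{WilZ}'', precisely the references you invoke for $p\in\{2,3\}$. You go further than the paper by spelling out the reduction to a torsion-free extension of $\Z_p^3$ by a finite $p$-group and by giving a self-contained argument for $p>3$ via the degree of $\Phi_p$ over $\Q_p$, so there is nothing to correct.
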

\begin{proposition}\label{prop:virtuallyZp3}
Let $G$ be a pro-$p$ $\PD^3$-group such that $\mathcal{Z}(G)$ is not cyclic. If  $p>2$, then $G$ is abelian. If $p=2$, then $G\cong \Z_2\times(\Z_2\rtimes\Z_2)$.
\end{proposition}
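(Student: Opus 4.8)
The first move is to pin down $\mathcal{Z}(G)$. Being a closed subgroup of the torsion-free pro-$p$ group $G$, $\mathcal{Z}(G)$ is torsion-free abelian pro-$p$, hence free abelian pro-$p$ of rank $r=\ccd(\mathcal{Z}(G))$ by \cite[Theorem~4.3.4]{RZ}, and $r\le\ccd(G)=3$; since $\mathcal{Z}(G)$ is not cyclic, $r\in\{2,3\}$. If $r=3$, then $\ccd(\mathcal{Z}(G))=3$ and Corollary~\ref{cor:center} gives that $G$ is abelian, so $G\cong\Z_p^3$ and we are done. So from now on $r=2$, i.e. $\mathcal{Z}(G)\cong\Z_p^2$; then $G$ is non-abelian, and $\mathcal{Z}(G)$ has infinite index in $G$ (otherwise $G$ would be abelian by Lemma~\ref{shur} together with torsion-freeness, forcing $\mathcal{Z}(G)=G$ of rank $3$).

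The plan for the case $r=2$ is to first prove that $G$ is virtually $\Z_p^3$. Here $\mathcal{Z}(G)\cong\Z_p^2$ is a non-trivial finitely generated normal subgroup of infinite index, so Proposition~\ref{JZ} applies; since $\mathcal{Z}(G)$ is not infinite cyclic, the conclusion is that $G/\mathcal{Z}(G)$ is virtually infinite cyclic. Choose $U\le_o G$ with $\mathcal{Z}(G)\le U$ and $U/\mathcal{Z}(G)\cong\Z_p$. Then $\mathcal{Z}(G)\le\mathcal{Z}(U)$, so $U/\mathcal{Z}(U)$ is a quotient of $\Z_p$, hence procyclic; a pro-$p$ group with procyclic central quotient is abelian, so $U$ is abelian, and being torsion-free of cohomological dimension $3$ it is $\cong\Z_p^3$. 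Thus $G$ is a torsion-free pro-$p$ group of cohomological dimension $3$ that is virtually $\Z_p^3$, and Proposition~\ref{prop:polycyclic_abelian} enumerates its possible isomorphism types.

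It remains to go through that list under the constraint $\rk\,\mathcal{Z}(G)=2$. For $p>3$ the only type is $\Z_p^3$, whose centre has rank $3$; hence the case $r=2$ does not arise, i.e. $G$ was abelian after all. For $p=3$ the only non-abelian possibility is the unique torsion-free extension of $\Z_3^3$ by $C_3$; there the conjugation action of $C_3$ on $\Z_3^3\otimes\Q_3$ makes $\Q_3^3$ into the sum of the trivial module and the $2$-dimensional irreducible $\Q_3[C_3]$-module (two-dimensional because $\Q_3$ contains no primitive cube root of unity), so the fixed subspace is one-dimensional and $\mathcal{Z}(G)$ would be cyclic — a contradiction, leaving $G\cong\Z_3^3$. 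For $p=2$ I would pass to an open normal $V\trianglelefteq_o G$ with $V\cong\Z_2^3$ and set $W=C_G(V)$: then $W\cong\Z_2^3$ by Lemma~\ref{shur}, $W\trianglelefteq G$, $Q:=G/W$ embeds as a finite $2$-subgroup of $\Aut(W)=GL_3(\Z_2)$, and $\mathcal{Z}(G)=W^Q$. The requirement $\rk\,W^Q=2$ says $Q$ acts on the $1$-dimensional quotient $(W\otimes\Q_2)/(W\otimes\Q_2)^Q$, and nontrivially (otherwise $Q$ acts unipotently, hence trivially), and it does so through $\Q_2^\times$, whose finite $2$-subgroups have order $\le 2$; this forces $|Q|=2$. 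A short computation with the (two) possible $\Z_2[C_2]$-lattice structures on $W$ and the torsion-free classes in $H^2(C_2,W)$ should then identify $G\cong\Z_2\times(\Z_2\rtimes\Z_2)$.

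I expect the $p=2$ bookkeeping of the last step to be the main obstacle: one has to keep the rational and the integral representation theory of $Q$ on $W$ apart (over $\Q_2$ the eigenvalue $-1$ appears, but the $\Z_2[C_2]$-lattice need not split, and mod $2$ — just as the order-$3$ automorphism of $\Z_3^3$ becomes unipotent mod $3$ — the picture degenerates), and then verify that every torsion-free extension surviving the centre-rank constraint really yields the asserted group, in particular ruling out the extensions of $\Z_2^3$ by the larger finite $2$-groups listed in Proposition~\ref{prop:polycyclic_abelian}.
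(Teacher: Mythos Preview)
Your argument is sound and reaches the same endpoint as the paper (both leave the final $p=2$ identification to an unspecified computation), but by a different and somewhat longer route. After Proposition~\ref{JZ} gives that $G/\mathcal{Z}(G)$ is virtually~$\Z_p$, the paper stays with this quotient: using that finite $p$-subgroups of $\Aut(\Z_p)\cong\Z_p\times C_{p-1}$ (resp.\ $\Z_2\times C_2$) are trivial (resp.\ of order~$\le 2$), it argues directly that $G/\mathcal{Z}(G)$ is $\Z_p$ for $p>2$ and is either $\Z_2$ or the pro-$2$ infinite dihedral group $\Z_2\rtimes C_2$ for $p=2$; the centre-by-cyclic observation then rules out the procyclic option, so for $p>2$ the rank-$2$ case simply does not occur, and for $p=2$ one is left with a torsion-free central extension of $\Z_2^2$ by $\Z_2\rtimes C_2$, which the paper dispatches by ``easy computations''. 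You instead first produce an open abelian subgroup $U\cong\Z_p^3$, invoke the classification of Proposition~\ref{prop:polycyclic_abelian}, and then eliminate the entries of that list by computing their centres. Your representation-theoretic arguments --- the irreducible $2$-dimensional $\Q_3[C_3]$-module forcing a rank-$1$ centre at $p=3$, and the faithful action of $Q$ on the $1$-dimensional cokernel $(W\otimes\Q_2)/(W\otimes\Q_2)^Q$ forcing $|Q|=2$ at $p=2$ --- are correct and in fact more explicit than anything in the paper. The paper's route is shorter and never needs Proposition~\ref{prop:polycyclic_abelian}; yours trades that for a more transparent reduction of the $p=2$ endgame to rank-$3$ $\Z_2[C_2]$-lattices with rank-$2$ fixed sublattice and their torsion-free extensions, which is exactly the bookkeeping you flag as the remaining obstacle.
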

\begin{proof} Finitely generated torsion-free abelian pro-$p$ groups are free abelian pro-$p$ of finite rank, then the rank of $\mathcal{Z}(G)$   equals either 2 or 3. 

 If $\mathcal{Z}(G)\cong\Z_p^3$, Proposition~\ref{prop:serre} implies that $\mathcal{Z}(G)$ has finite index in $G$ and so the commutator is finite by Lemma \ref{shur}. For $G$ is torsion-free, $G$ is abelian.

Suppose $\mathcal{Z}(G)\cong\Z_p\times\Z_p$. By Proposition~\ref{JZ}, the quotient $G/\mathcal{Z}(G)$ is virtually $\Z_p$, i.e. $G/\mathcal{Z}(G)\cong \Z_p$  for $p>2$ and either $\Z_2$ or $\Z_2\rtimes C_2$ for $p=2$ (since $Aut(\Z_p)\cong \Z_p\times C_{p-1}$ or $\Z_2\times C_2$). But center by cyclic group is abelian so, for  $p=2$, $G/\mathcal{Z}(G)\cong\Z_2\rtimes C_2$ is infinite dihedral. Thus $G$ is an extension of $\Z_2\times\Z_2$ by $\Z_2\rtimes C_2$ and easy computations yield $G\cong \Z_2\times(\Z_2\rtimes\Z_2)$. 

Suppose now $Z(G)$ is free abelian of rank 3 and so is of finite index. By Lemma \ref{shur}, $[G,G]$ is finite and  $G$ is abelian since $G$ is torsion free. This finishes the proof.
\end{proof}
We finish the section with the description of torsion-free polycyclic subgroups of Hirsch length 3. They are exactly polycyclic $\PD^3$ pro-$p$ groups. 
\begin{theorem} \label{polycyclic}
Let $G$ be a torsion-free  polycyclic pro-$p$ group of Hirsch length 3. Then one of the following holds:
\begin{enumerate}
\item[(i)] $G$ is virtually abelian and so described in Proposition \ref{prop:polycyclic_abelian};
\item[(ii)]
 $G\cong (\Z_p\times \Z_p)\rtimes \Z_p$ for $p>2$ and has a subgroup of index 2 isomorphic to $(\Z_2\times \Z_2)\rtimes \Z_2$  if $p=2$.
 \end{enumerate}
\end{theorem}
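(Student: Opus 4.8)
The plan is to analyze the polycyclic series of $G$ from the top down, using the torsion-freeness and the Hirsch length constraint $h(G)=3$ to pin down the structure. First I would observe that, by Proposition~\ref{prop:poly}, $G$ contains a normal open poly-$\Z_p$ subgroup $N$; since $h(N)=h(G)=3$ and $N$ is torsion-free, $N$ is an iterated extension of three copies of $\Z_p$. A torsion-free poly-$\Z_p$ group of Hirsch length $3$ satisfies (D1)--(D4) — it is a pro-$p$ $\PD^3$-group — so by Proposition~\ref{prop:open} the whole group $G$ is a pro-$p$ $\PD^3$-group; this is what justifies the remark preceding the theorem. Now I would split on whether $\mathcal{Z}(G)$ is cyclic.

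If $\mathcal{Z}(G)$ is non-cyclic, Proposition~\ref{prop:virtuallyZp3} immediately gives that $G$ is abelian ($p>2$) or $G\cong\Z_2\times(\Z_2\rtimes\Z_2)$ ($p=2$), and in either case $G$ is virtually abelian, so we land in case (i). So assume $\mathcal{Z}(G)$ is cyclic — either trivial or $\cong\Z_p$. The case $\mathcal{Z}(G)\cong\Z_p$ is the heart of the argument: here $Z:=\mathcal{Z}(G)$ is a non-trivial finitely generated normal subgroup of infinite index (infinite index because $\mathcal{Z}(G)$ cyclic together with $\ccd(G)=3$ forces the index to be infinite, by Corollary~\ref{cor:center}), so Proposition~\ref{JZ} applies: either $Z$ is infinite cyclic with $G/Z$ virtually Demushkin, or $Z$ is Demushkin — impossible, as $Z\cong\Z_p$. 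Hence $G/Z$ is virtually Demushkin; but $G/Z$ is also polycyclic (a quotient of a polycyclic group), and a polycyclic pro-$p$ group that is virtually Demushkin must be virtually $\Z_p\times\Z_p$ (a Demushkin group is polycyclic only if it is $\Z_p\times\Z_p$, which is the abelian Demushkin group). So $G$ is (virtually $\Z_p^2$)-by-$\Z_p$ up to finite index, i.e. $G$ has an open subgroup $U$ fitting in $1\to\Z_p^2\to U\to\Z_p\to 1$. Finally I would show that, since $\mathcal{Z}(U)$ contains $Z\cong\Z_p$ but $U$ is non-abelian (else $\mathcal{Z}(G)$ would be larger), the action of the top $\Z_p$ on $\Z_p^2$ is by an infinite-order automorphism fixing a line; a short computation with $\mathrm{Aut}(\Z_p\times\Z_p)=\mathrm{GL}_2(\Z_p)$ identifies $U\cong(\Z_p\times\Z_p)\rtimes\Z_p$, and for $p=2$ the extra index-$2$ subtlety from $\mathrm{Out}$-phenomena produces the stated index-$2$ copy of $(\Z_2\times\Z_2)\rtimes\Z_2$.

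The remaining case is $\mathcal{Z}(G)=1$. Here I would derive a contradiction. Take the open normal poly-$\Z_p$ subgroup $N$ from above and let $A$ be the last non-trivial term $G_i$ in a poly-$\Z_p$ series of $N$ that is normal in $G$ — concretely, consider the centralizer in $G$ of a minimal non-trivial normal $\Z_p$-subgroup of $N$, or directly note that a torsion-free polycyclic group always has a non-trivial Fitting-type normal abelian subgroup. More cleanly: $N$ has a characteristic (hence $G$-normal) subgroup isomorphic to $\Z_p$ sitting at the bottom of its derived/lower-central series, so $G$ has a normal $\Z_p$; being normal of infinite index, Proposition~\ref{JZ} forces $G/\Z_p$ virtually Demushkin and polycyclic, hence as above virtually $\Z_p^2$, and then $G$ is virtually $\Z_p^2$-by-$\Z_p$ with $\mathcal{Z}(G)\neq1$ (the bottom $\Z_p$ of an open $\Z_p^2$-by-$\Z_p$ subgroup is central in that subgroup, and one checks it contributes to $\mathcal{Z}(G)$), contradicting $\mathcal{Z}(G)=1$. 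Thus $\mathcal{Z}(G)=1$ cannot occur, and the theorem follows.

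The main obstacle I expect is the bookkeeping in the $\mathcal{Z}(G)\cong\Z_p$ case: passing from ``$G/Z$ virtually Demushkin and polycyclic'' to a precise isomorphism type, and then handling the descent from an open subgroup $U$ back to $G$ — in particular the $p=2$ phenomena, where the relevant finite quotients of $\mathrm{GL}_2(\Z_2)$ (dihedral and quaternion pieces, exactly as in Proposition~\ref{prop:polycyclic_abelian}(iii)) are genuinely larger than in the odd case and force the ``index $2$'' wording rather than a clean isomorphism. Keeping track of which extensions are torsion-free, and ruling out torsion being introduced when one enlarges $U$ to $G$, is where the care is needed; everything else reduces to the already-cited Propositions~\ref{JZ}, \ref{prop:virtuallyZp3}, \ref{prop:poly} and Corollary~\ref{cor:center}.
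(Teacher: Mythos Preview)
Your case split on the centre does not work: the case $\mathcal{Z}(G)=1$ does \emph{not} lead to a contradiction, so the argument collapses there. Concretely, take $G=\Z_p^2\rtimes\Z_p$ where a generator $t$ of the top $\Z_p$ acts on $\Z_p^2$ by the diagonal matrix $\mathrm{diag}(1+p,(1+p)^{-1})\in\mathrm{GL}_2(\Z_p)$. This $G$ is torsion-free polycyclic of Hirsch length $3$, is not virtually abelian, and a direct check shows $\mathcal{Z}(G)=1$: the only $t$-fixed vector in $\Z_p^2$ is $0$, and no non-trivial power of $t$ acts trivially. So case (ii) of the theorem genuinely contains centreless examples. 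Your proposed contradiction relies on the unjustified (and, by the example, false) assertion that a normal $\Z_p$ at the bottom of an open poly-$\Z_p$ subgroup must ``contribute to $\mathcal{Z}(G)$''; being central in an open subgroup does not force centrality in $G$.

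There is a secondary gap even in the $\mathcal{Z}(G)\cong\Z_p$ case: you only produce an \emph{open} subgroup $U\cong(\Z_p\times\Z_p)\rtimes\Z_p$, whereas the statement asserts this for $G$ itself when $p>2$ (and index exactly $2$ when $p=2$). Passing from $U$ to $G$ requires controlling the finite quotient $G/U$, and nothing in your outline does this. The paper sidesteps both problems by working not with $\mathcal{Z}(G)$ but with a \emph{maximal normal abelian} subgroup $A$ of $G$: one shows $A$ cannot be cyclic (since then $C_G(A)$, being the kernel of $G\to\mathrm{Aut}(A)$ with $\mathrm{Aut}(\Z_p)$ virtually cyclic, would have Hirsch length $\geq2$ and yield a strictly larger normal abelian subgroup), so $\mathrm{rank}(A)\in\{2,3\}$; rank $3$ gives (i), and rank $2$ gives $G/A$ virtually cyclic, hence $G/A\cong\Z_p$ for $p>2$ and possibly infinite dihedral for $p=2$, yielding (ii) directly with the correct index.
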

\begin{proof}  Let $A$ be a maximal normal abelian subgroup of $G$. Then $C_G(A)$ is the kernel of the homomorphism $G\longrightarrow Aut(A)$. If  $A$ is cyclic, then  $Aut(A)$ is virtually cyclic; therefore  $C_G(A)$  has Hirsch length $\geq 2$ and  so is virtually abelian of rank 2, a contradiction.

If $rank(A)=3$, we are in case (i). 
Suppose $rank(A)=2$. Then $G/A$ is virtually cyclic. Therefore, for $p>2$, $G/A\cong \Z_p$ and $G\cong A\rtimes \Z_p$. For $p=2$, the quotient group $G/A$ can also be infinite dihedral and so contains an infinite cyclic subgroup of index 2 whose inverse image in $G$ has the required structure. This finishes the proof.
\end{proof} 
\begin{corollary} 
Let $G$ be a torsion-free polycyclic pro-$p$ group of Hirsch length 3. If $p>3$ then $G=\Z_p\times \Z_p\rtimes \Z_p$.  
\end{corollary}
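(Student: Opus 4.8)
The plan is to read the statement straight off Theorem~\ref{polycyclic}. That theorem partitions a torsion-free polycyclic pro-$p$ group $G$ of Hirsch length $3$ into two classes: case (ii), in which $G\cong(\Z_p\times\Z_p)\rtimes\Z_p$ whenever $p>2$; and case (i), in which $G$ is virtually abelian and therefore of one of the isomorphism types enumerated in Proposition~\ref{prop:polycyclic_abelian}. So the only work is to check what the hypothesis $p>3$ does to each case.

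In case (ii) there is nothing further to prove: since $p>3>2$ we already have $G\cong(\Z_p\times\Z_p)\rtimes\Z_p$, which is exactly the asserted form. In case (i) I would simply inspect the list in Proposition~\ref{prop:polycyclic_abelian}: the extra items there are torsion-free extensions of $\Z_p^3$ by non-trivial finite $p$-groups, and these occur only for $p=3$ and $p=2$. Hence for $p>3$ the sole possibility is $G\cong\Z_p\times\Z_p\times\Z_p$, which is $(\Z_p\times\Z_p)\rtimes\Z_p$ with trivial action. Combining the two cases gives $G=\Z_p\times\Z_p\rtimes\Z_p$.

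I do not expect any genuine obstacle here; the corollary is bookkeeping on top of Theorem~\ref{polycyclic}. The one point that deserves a word of justification is that Proposition~\ref{prop:polycyclic_abelian} really applies to $G$ in case (i), i.e.\ that $\ccd(G)=3$. This is already implicit in the statement of Theorem~\ref{polycyclic}(i), and independently follows from Propositions~\ref{prop:poly} and~\ref{prop:serrefree}: $G$ contains an open poly-$\Z_p$ subgroup $U$ of Hirsch length $3$, which as an iterated extension of copies of $\Z_p$ is a pro-$p$ $\PD^3$-group, so $\ccd(G)=\vcd(G)=\ccd(U)=3$.
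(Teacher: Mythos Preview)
Your argument is correct and is exactly the deduction the paper intends: the corollary is stated without proof, as an immediate consequence of Theorem~\ref{polycyclic} together with the case analysis of Proposition~\ref{prop:polycyclic_abelian}, precisely as you unpack it. Your additional remark that $\ccd(G)=3$ (so that Proposition~\ref{prop:polycyclic_abelian} applies) is a welcome clarification.
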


\section{Centralizers  in pro-$p$ $\PD^3$ groups}
\begin{theorem}\label{thm:centralizers}
Let $G$ be a pro-$p$ $\PD^3$-group and $H\neq1$ is a finitely generated subgroup of $G$ with $C_G(H)\neq 1$. Then one of the following holds:
\begin{enumerate}
\item $H\cong\Z_p$ and $C_G(H)/H$ is virtually Demushkin;
\item $H$ is a non-abelian Demushkin group and $C_G(H)\cong \Z_p$,;
\item $H\cong\Z_p\times\Z_p$, $C_G(H)\cong\Z_p^3$  and $G$ is virtually $\Z_p^3$;
\item $H\cong \Z_2\rtimes \Z_2$ is generalized dihedral pro-$2$ group, $C_G(H)\cong \Z_2\rtimes \Z_2$ and $G\cong \Z_2\times(\Z_2\rtimes\Z_2)$;
\item $H$ and $C_G(H)$ are non-abelian free pro-$p$ groups;
\item $H$ and $C_G(H)$ are polycyclic;
\item $H\cong\Z_p$ and $C_G(H)/H$ is  virtually free pro-$p$.
\item $H$ is cyclic by virtually free and $C_G(H)\cong \Z_p$.
\item $C_G(H)$ is cyclic, $H$ is open and $G$ is cyclic by virtually Demushkin.
\end{enumerate}
\end{theorem}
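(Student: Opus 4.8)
The plan is to set $Z:=C_G(H)$ and $M:=HZ$ (a closed subgroup of $G$, being the image of the compact group $H\times Z$ under multiplication), to record how these interact, and then to run a case analysis organized around the pair $(\ccd H,\ccd Z)$. As $Z$ centralizes $H$ and $H$ normalizes $C_G(H)$, both $H$ and $Z$ are normal in $M$, $[H,Z]=1$, and $H\cap Z=\mathcal{Z}(H)\subseteq\mathcal{Z}(Z)$; moreover $\mathcal{Z}(H)\neq 1$ whenever $H$ is open in $G$, because $G$ is torsion-free. Since $[H,Z]=1$, multiplication is a surjective homomorphism $H\times Z\to M$ whose kernel is the central subgroup $\{(a,a^{-1}):a\in\mathcal{Z}(H)\}\cong\mathcal{Z}(H)$; combining the well-known additivity $\ccd(H\times Z)=\ccd H+\ccd Z$ with the standard bound $\ccd(H\times Z)\leq\ccd(\mathcal{Z}(H))+\ccd M$ for this extension and with $\ccd M\leq\ccd G=3$ gives the basic inequality
\[ \ccd H+\ccd Z\leq 3+\ccd(\mathcal{Z}(H)). \]
Since $\mathcal{Z}(H)$ is torsion-free abelian pro-$p$ of finite cohomological dimension it is free abelian of rank $r:=\ccd(\mathcal{Z}(H))\leq 3$, with $r\leq\min\{\ccd H,\ccd Z\}$. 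The decisive consequence is that whenever $\ccd H+\ccd Z-r=3$ one has $\ccd M=3$, so $M$ is open in $G$, hence a pro-$p$ $\PD^3$-group by Propositions~\ref{prop:serre} and~\ref{prop:open}; one may then apply Propositions~\ref{JZ} and~\ref{prop:WZ} to the \emph{finitely generated} normal subgroup $H\trianglelefteq M$.

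If $\ccd H=3$, then $H$ is open, hence a pro-$p$ $\PD^3$-group with $\mathcal{Z}(H)\cong\Z_p^r$, $1\leq r\leq 3$, and $Z$ is virtually $\mathcal{Z}(H)$: for $r=3$ Corollary~\ref{cor:center} shows $H\cong\Z_p^3$, for $r=2$ Proposition~\ref{prop:virtuallyZp3} forces $p=2$ and $H\cong\Z_2\times(\Z_2\rtimes\Z_2)$, in both cases $Z$ is free abelian, which is case (6); for $r=1$, Proposition~\ref{JZ} applied to $H$, together with a passage to the normal core $H^G$ (whose centre is a characteristic $\Z_p$, hence normal in $G$), gives case (9). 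The symmetric situation $\ccd Z=3$ is treated through $C_G(Z)\supseteq H$, which is virtually $\mathcal{Z}(Z)$, and produces cases (1), (3), (6) according to $\rk\mathcal{Z}(Z)$. Now assume $\ccd H,\ccd Z\leq 2$, so $H$ and $Z$ both have infinite index, and split on $r$. If $r=0$, then $M=H\times Z$: when $\{\ccd H,\ccd Z\}=\{1,2\}$ one has $\ccd M=3$, and a K\"unneth argument forces $M$ to be a direct product of a pro-$p$ $\PD^1$-group and a pro-$p$ $\PD^2$-group, so (as $\mathcal{Z}(H)=1$) $H$ is a non-abelian Demushkin group with $Z\cong\Z_p$, which is case (2), while the configuration ``$H$ free non-abelian, $\ccd Z=2$'' is excluded; when $\ccd H=\ccd Z=1$, $H$ is free non-abelian and $Z$ is free pro-$p$, yielding cases (5) and (8). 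If $r\geq 1$, then $\mathcal{Z}(H)\cong\Z_p^r\neq 1$: when $\ccd H+\ccd Z-r=3$, $M$ is a pro-$p$ $\PD^3$-group and Proposition~\ref{JZ} applied to $H\trianglelefteq M$ makes $H$ cyclic or Demushkin and pins $Z$ down via $M/H\cong Z/\mathcal{Z}(H)$, giving cases (1), (2), (6), (7); when $\ccd H+\ccd Z-r<3$ necessarily $r=\ccd H$ or $r=\ccd Z$, so Corollary~\ref{cor:center} makes $H$ or $Z$ isomorphic to $\Z_p^r$ and Proposition~\ref{prop:WZ} controls the other, again giving (6) or (7). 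Finally, the pro-$2$ configuration $H\cong Z\cong\Z_2\rtimes\Z_2$ yields $M\cong\Z_2\times(\Z_2\rtimes\Z_2)$, and a short computation gives $G=M$, which is case (4); the precise isomorphism types of $G$ in the polycyclic conclusions follow from Proposition~\ref{prop:polycyclic_abelian} and Theorem~\ref{polycyclic}.

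I expect the difficulty to be organizational rather than conceptual: one must check that the analysis of $(\ccd H,\ccd Z,\ccd\mathcal{Z}(H))$, refined by $\ccd\mathcal{Z}(Z)$ in the open subcases, is exhaustive and that every branch collapses to exactly one listed conclusion, including the degenerate configurations in which $C_G(H)/H$ is finite. Two points need particular care. First, the pro-$2$ phenomena: Proposition~\ref{prop:virtuallyZp3} and the classification of torsion-free polycyclic pro-$2$ groups of Hirsch length at most $3$ bring in the generalized dihedral group $\Z_2\rtimes\Z_2$ and the group $\Z_2\times(\Z_2\rtimes\Z_2)$, which are precisely what produce case (4) and the non-abelian polycyclic possibilities inside cases (1) and (6) that have no counterpart for odd $p$. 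Second, every appeal to Proposition~\ref{JZ} requires a finitely generated normal subgroup; this is why at each step one arranges the ambient $\PD^3$-group to be $M=\overline{HZ}$ and applies the proposition to $H$, finitely generated by hypothesis, rather than to $Z$, whose finite generation is obtained only afterwards.
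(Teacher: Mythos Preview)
Your plan is sound and will work, but it is organized quite differently from the paper's proof. The paper does not introduce the inequality $\ccd H+\ccd Z\leq 3+\ccd\mathcal{Z}(H)$ at all; instead it simply splits into three cases according to whether $\ccd(HC_G(H))$ equals $3$, $2$, or $1$. In Case~1 ($M$ open) it applies Proposition~\ref{JZ} directly to $H\trianglelefteq M$ and reads off conclusions (1)--(4), (6), (9) from the three possibilities ($H$ cyclic, $H$ Demushkin, $H$ open); in Case~2 ($\ccd M=2$) it branches on whether $\mathcal{Z}(H)$ is trivial and then uses Corollary~\ref{cor:center} and Proposition~\ref{prop:WZ} to obtain (5)--(8); in Case~3 ($M$ free) it observes $H=C_G(H)\cong\Z_p$.

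Your inequality is a pleasant structural device: it tells you in advance, from the triple $(\ccd H,\ccd Z,\ccd\mathcal{Z}(H))$, when $M$ is forced to be open, so you can apply Proposition~\ref{JZ} without first computing $\ccd M$. The price is a larger case table and more bookkeeping, and indeed your labelling of outputs is slightly off in places: for $r\geq 1$ with $\ccd H,\ccd Z\leq 2$ the \emph{only} configuration with $\ccd H+\ccd Z-r=3$ is $(2,2,1)$, which forces $p=2$ and $H\cong\Z_2\rtimes\Z_2$, landing in (4) or (6) rather than in (1), (2), (7) as you wrote. Similarly, in the $\ccd H=3$, $r=1$ branch one should allow the polycyclic outcome (6) alongside (9), exactly as the paper does. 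These are bookkeeping slips in the sketch, not conceptual gaps; once you tabulate the finitely many triples $(\ccd H,\ccd Z,r)$ carefully the argument goes through. The paper's three-case split is shorter, while your approach makes the role of $\mathcal{Z}(H)$ more transparent.
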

\begin{proof} We distinguish three cases each of which has several subcases.
\begin{description}[style=unboxed,leftmargin=0cm]
\item[Case 1] Suppose $HC_G(H)\trianglelefteq_o G$. By Proposition~\ref{prop:open},  $HC_G(H)$ is a pro-$p$ $\PD^3$-group satisfying the conclusions of Proposition~\ref{JZ} for $H$ finitely generated and normal. Then either $H\cong\Z_p$ or $H$ is Demushkin or $H$ is open $\PD^3$. For $H\cong\Z_p$, $C_G(H)=HC_G(H)$ and $C_G(H)/H$ is virtually Demushkin by Proposition~\ref{JZ} again (i.e. (1) holds). If $H$ is Demushkin with trivial center, then $HC_G(H)\cong H\times C_G(H)$ and $C_G(H)\cong \Z_p$  (see Proposition~\ref{prop:serrefree}) and so (2) holds. Demushkin groups with non-trivial center are isomorphic to $\Z_p\times\Z_p$ or to the Klein bottle pro-$2$ group $\Z_2\rtimes \Z_2$. For $H\cong \Z_p\times\Z_p$, $C_G(H)=HC_G(H)$ and for $p>2$ is central extension of $H$ by $\Z_p$ and so $C_G(H)\cong \Z_p^3$, i.e. (3) holds. If $H\cong \Z_2\times \Z_2$ or $H\cong\Z_2\rtimes \Z_2$, then $C_G(H)$ contains $\Z_2\times \Z_2$ and so, by Theorem \ref{polycyclic}, $G$ is virtually $\Z_2\times(\Z_2\rtimes\Z_2)$. Thus  (4) holds. 

Finally, if $H$ is open in $G$ then the center $\mathcal{Z}(H^G)$ of the normal core of $H$ is non-trivial and normal in $G$. Then we have either  (6) or (9) by Proposition~\ref{JZ}.
\end{description}
\begin{description}[style=unboxed,leftmargin=0cm]
\item[Case 2] Suppose $\ccd(HC_G(H))=2$. If $\mathcal{Z}(H)=1$, then $HC_G(H)\cong H\times C_G(H)$ and both $H$ and $C_G(H)$ are free pro-$p$ so (5) holds. 

\smallskip
Assume $\mathcal{Z}(H)$ is non-trivial. Then by Corollary \ref{cor:center} either $HC_G(H)$ is abelian  (hence $HC_G(H)=C_G(H)$ and we have case (6)) or $\mathcal{Z}(H)\cong \Z_p$.
 In the latter case, by Proposition~\ref{prop:WZ}, the quotient $HC_G(H)/\mathcal{Z}(H)$ is virtually free and $H/\mathcal{Z}(H)$ is either open in $HC_G(H)/\mathcal{Z}(H)$ or finite. So either $\mathcal{Z}(H)\trianglelefteq_o H$ or $H\trianglelefteq_o HC_G(H)$.

In the first case $H=\mathcal{Z}(H)\cong\Z_p$.  So $HC_G(H)=C_G(H)$ which is $\Z_p$ by virtually free by Proposition~\ref{prop:WZ}, i.e. (7) holds.

In the second case ($H\trianglelefteq_o HC_G(H)$) we have $\mathcal{Z}(H)\trianglelefteq_o C_G(H)$ and so $C_G(H)\cong\Z_p\cong\mathcal{Z}(H)$. Then  either $H$ is polycyclic and so is $H C_G(H)$ (i.e. (6) holds)   or $H/\mathcal{Z}(H)$ is virtually free non-abelian  and so (8) holds.

\end{description}
\begin{description}[style=unboxed,leftmargin=0cm]
\item[Case 3] \noindent Suppose $HC_G(H)$ is free. Then $H$ and $C_G(H)$ are free pro-$p$. But if $H$ is free pro-$p$ non-abelian then $C_G(H)=1$ contradicting the hypothesis. So   $H=C_G(H)\cong \Z_p$, i.e. case (6) holds.  
\end{description}
\end{proof}

\begin{remark} It is well-known that discrete $\PD^3$-groups do not contains products of nonabelian free groups \cite{Peter}. At this stage, we do not know whether case (5) can occur in the pro-$p$ context.

In cases (1), (7) and (8) the finite subgroups of the mentioned virtually Demushkin and virtually free pro-$p$ group are actually cyclic, since the inverse images of them in $G$ are torsion-free virtually cyclic and therefore are cyclic. Moreover, $C_G(H)$ in case (7) and $H$ in case (8) are the fundamental groups of finite graphs of infinite cyclic groups by \cite{HZ13}. It follows that, in this cases, $C_G(H)$ and $H$ are the pro-$p$ completions of abstract fundamental groups of finite graph of cyclic groups.

A natural example of $C_G(H)$ in case (1) is the pro-$p$ completion of a residually-$p$ fundamental group of a Seifert 3-manifold.
\end{remark}
\section{Subnormal subgroups of pro-$p$ $\PD^3$-groups}
\begin{lemma}\label{lem:unique} Let $G$ be a pro-p group and $N$ an infinite maximal cyclic normal subgroup of $G$. If   $G/N$  does not have  non-trivial normal cyclic subgroups, then  $N$ is characteristic.
\end{lemma}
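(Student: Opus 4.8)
The plan is to exploit the uniqueness of $N$ as an infinite \emph{maximal} cyclic normal subgroup that additionally satisfies the displayed hypothesis on $G/N$, so that any automorphism of $G$ must preserve it. First I would observe that, since $N$ is maximal cyclic normal and infinite, $N\cong\Z_p$ and the quotient map $q\colon G\to G/N$ is well-defined. Now let $\alpha\in\Aut(G)$ (eventually one only needs inner automorphisms $\alpha=\mathrm{conj}_g$ for $g\in G$, but the characteristic statement asks for all automorphisms). Then $\alpha(N)$ is again an infinite cyclic normal subgroup of $G$. The key step is to show $\alpha(N)=N$: consider the image $\overline{\alpha(N)}=q(\alpha(N))\le G/N$. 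Since $\alpha(N)\cong\Z_p$ is procyclic and normal in $G$, its image $q(\alpha(N))$ is a procyclic normal subgroup of $G/N$, hence $q(\alpha(N))=1$ by the hypothesis that $G/N$ has no nontrivial normal cyclic subgroups. Therefore $\alpha(N)\le \kernel q=N$.

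The remaining point is that $\alpha(N)\le N$ forces equality. Here I would use that $\alpha$ is an automorphism: $\alpha^{-1}$ is also an automorphism, so by the same argument $\alpha^{-1}(N)\le N$, i.e. $N\le\alpha(N)$; combined with $\alpha(N)\le N$ this gives $\alpha(N)=N$. (Alternatively, one can argue directly: $\alpha(N)$ is a closed infinite, hence open, subgroup of $N\cong\Z_p$; if $\alpha(N)$ were proper then $\alpha^{-1}$ would send $N$ to a subgroup strictly containing $N$ inside $G$, but that subgroup, being cyclic — as a subgroup of $\Z_p$ containing $\alpha(N)=\alpha(\alpha^{-1}(N))$ — would violate maximality of $N$; so $\alpha(N)=N$.) Since $\alpha$ was an arbitrary automorphism, $N$ is characteristic in $G$.

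The only genuine subtlety — the part I would be most careful about — is the step identifying $q(\alpha(N))$ as a \emph{cyclic} (procyclic) normal subgroup of $G/N$ so that the hypothesis applies: one must note that a continuous image of $\Z_p$ is procyclic, that it is closed (image of a compact group), and that normality of $\alpha(N)$ in $G$ passes to normality of its image in $G/N$. None of these is hard, but they are exactly where the hypotheses ``$N$ maximal cyclic'' and ``$G/N$ has no nontrivial normal cyclic subgroup'' are consumed, and they must be invoked in the right order. Everything else is formal manipulation with automorphisms and the kernel of $q$.
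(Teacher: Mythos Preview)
Your proof is correct and follows essentially the same route as the paper: show that any infinite cyclic normal subgroup of $G$ has trivial image in $G/N$ and hence lies inside $N$, then apply this to $\alpha(N)$ for an arbitrary automorphism $\alpha$. The paper's version is terser (it stops at $\phi(N)\le N$, leaving the passage to equality via $\phi^{-1}$ implicit), while you spell out that final step and the routine topological checks, but the argument is the same.
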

\begin{proof} Suppose $G$ has a normal subgroup $K\cong\Z_p$. The projection of $K$ to $G/N$ is normal and cyclic, and then trivial. Hence all normal cyclic subgroups of $G$ are contained in $N$. In particular, $\phi(N)\leq N$ for every automorphism $\phi$ of $G$.
\end{proof}
\begin{theorem} \label{thm:subnormal} Let $G$ be a pro-$p$ $\PD^3$-group which has a nontrivial finitely presented subgroup $H$ which is subnormal and of infinite index in $G$. Then one of the following holds:
\begin{enumerate}
\item $H$ is Demushkin and it is normal in an open subgroup $U$ of $G$ such that  $U/H\cong\Z_p$;
\item $H$ is cyclic and $G$ is  polycyclic;
\item $H$ is cyclic and  normal in $G$ with $G/H$ virtually Demushkin.
\end{enumerate}
\end{theorem}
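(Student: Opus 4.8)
The plan is to argue by induction on the subnormal defect $n$ of $H$ in $G$, i.e.\ on the least length of a chain $H=H_0\trianglelefteq H_1\trianglelefteq\cdots\trianglelefteq H_n=G$, splitting into cases according to $\ccd(H)$. Since $H\neq1$ one has $\ccd(H)\geq1$, and since $[G:H]=\infty$ Proposition~\ref{prop:serre} gives $\ccd(H)\leq2$, so $\ccd(H)\in\{1,2\}$. Every subgroup in sight is torsion-free, so by Propositions~\ref{prop:cd}(3) and~\ref{prop:serrefree} ``$\ccd\leq1$'' means ``free pro-$p$'' and a torsion-free virtually $\Z_p$ group is $\Z_p$; I will also use that a pro-$p$ group which is virtually polycyclic is polycyclic (concatenate a series of the polycyclic open subgroup with the preimage of a series of the finite quotient), that a nontrivial finitely generated normal subgroup of a free pro-$p$ group is open, and the $\PD^2$-analogue of Proposition~\ref{JZ}. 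The base case $n=1$ is immediate: $H\trianglelefteq G$ is finitely generated, nontrivial, of infinite index, so Proposition~\ref{JZ} gives either $H\cong\Z_p$ with $G/H$ virtually Demushkin (case~(3)), or $H$ Demushkin with $G/H$ virtually $\Z_p$, in which case the preimage in $G$ of an open $\Z_p\leq G/H$ is an open $U$ with $H\trianglelefteq U$ and $U/H\cong\Z_p$ (case~(1)); in particular a free pro-$p$ group of rank $\geq2$ is never normal of infinite index, a fact I reuse.

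For the case $\ccd(H)=2$ I would first climb the chain. Inductively, if $H$ is open in $H_i$ (so $H_i$ is finitely presented of $\ccd 2$) and $[G:H_{i+1}]=\infty$, then $\ccd(H_{i+1})=2$ by Proposition~\ref{prop:serre}, and since $|H^2(H_i,\F_p)|<\infty$ Proposition~\ref{prop:WZ} forces $\vcd(H_{i+1}/H_i)=0$, i.e.\ $H_i$, hence $H$, is open in $H_{i+1}$. Taking $j$ maximal with $[G:H_j]=\infty$, we get $H$ open in $H_j\trianglelefteq H_{j+1}$ with $H_{j+1}$ open in $G$ and hence a pro-$p$ $\PD^3$-group (Proposition~\ref{prop:open}); by Lemma~\ref{lem:poly} and its proof $H$ has only finitely many conjugates in $H_{j+1}$, so $N_G(H)$ is open in $G$ and is a pro-$p$ $\PD^3$-group in which $H$ is normal, finitely presented, nontrivial and of infinite index. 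Applying Proposition~\ref{JZ} inside $N_G(H)$ (necessarily the Demushkin alternative, as $\ccd(H)=2$) shows $H$ is Demushkin and $N_G(H)/H$ is virtually $\Z_p$; pulling back an open $\Z_p$ produces the open subgroup $U\leq G$ of case~(1).

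For the case $\ccd(H)=1$, so $H$ is finitely generated free of rank $r\geq1$ and $n\geq2$, I would look at the first step $H\trianglelefteq H_1$. If $[H_1:H]<\infty$, then $H$ is open in $H_1$, so $H_1$ is finitely presented, subnormal of smaller defect and of infinite index, and the inductive hypothesis gives one of (1)--(3) for $(H_1,G)$: case~(1) is impossible since $\ccd(H)=1$ forbids $H_1$ Demushkin; case~(2) gives $H\cong\Z_p$ and $G$ polycyclic; and in case~(3) the characteristic subgroup $H=p^kH_1$ of $H_1$ is normal in $G$ with $G/H$ virtually Demushkin, because in $1\to H_1/H\to G/H\to G/H_1\to1$ the finite kernel splits off over an open subgroup (restriction on $H^2(-,\F_p)$ of the Demushkin part to any proper open subgroup vanishes), yielding (3). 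If $[H_1:H]=\infty$, then $\ccd(H_1)=2$ — it cannot be $1$, else $H$ would be open in the free pro-$p$ group $H_1$ — and $[G:H_1]=\infty$; here I would pass to the normal closures of $H$ in successive terms of the chain (each normal over $H$, of strictly smaller defect in $G$, finitely generated once its cohomological dimension drops to $1$, and Demushkin by Proposition~\ref{JZ} when finitely generated of $\ccd 2$) to reduce to an application of Proposition~\ref{JZ} or of its $\PD^2$-analogue showing $H\cong\Z_p$ (so $r=1$, excluding free non-abelian $H$) and yielding either that $G$ is polycyclic (case~(2)) or that $H^G$ is infinite cyclic normal in $G$ with $G/H^G$ virtually Demushkin; in the latter situation Lemma~\ref{lem:unique}, together with the dichotomy ``the virtually Demushkin quotient has rank $2$, whence $G$ is polycyclic'' versus ``it has rank $\geq3$, whence it admits no nontrivial normal cyclic subgroup'', promotes normality to $H$ itself (after, if necessary, enlarging $H$ within $H^G$), giving (3).

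The main obstacle is precisely the subcase $\ccd(H)=1$, $[H_1:H]=\infty$, $\ccd(H_1)=2$: the intermediate terms of a subnormal chain need not be finitely generated, so Propositions~\ref{prop:WZ} and~\ref{JZ} cannot be invoked for them directly, and one must exploit that $H$ is finitely \emph{presented} — not merely finitely generated — to descend to a finitely generated normal situation. The accompanying difficulty is upgrading the normality of the cyclic $H$ from an open subgroup of $G$ to $G$ itself in case~(3), which is where Lemma~\ref{lem:unique} and the structure of (virtually) Demushkin groups must be used.
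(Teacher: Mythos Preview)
Your treatment of the case $\ccd(H)=2$ is essentially the paper's argument (climb the chain using Proposition~\ref{prop:WZ}, then apply Proposition~\ref{JZ} inside an open subgroup), and your inductive reduction when $[H_1:H]<\infty$ is fine, though overcomplicated: once you have shown $H=p^kH_1$ is normal in $G$ and cyclic, Proposition~\ref{JZ} applied directly to $H\trianglelefteq G$ already gives that $G/H$ is virtually Demushkin, so the splitting discussion via $H^2$ is unnecessary.

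The genuine gap is the subcase $\ccd(H)=1$, $[H_1:H]=\infty$, which you yourself flag as the main obstacle. Your proposed fix, ``pass to the normal closures of $H$ in successive terms of the chain'', does not work: these closures need not be finitely generated (let alone finitely presented), so neither the inductive hypothesis nor Propositions~\ref{prop:WZ} and~\ref{JZ} apply to them, and there is no mechanism in your sketch to force finite generation. In particular you have no argument ruling out non-abelian free $H$, and no argument producing the dichotomy ``$G$ polycyclic or $H$ normal in $G$'' for cyclic $H$; both are simply asserted. The final appeal to Lemma~\ref{lem:unique} is harmless once that dichotomy is in hand (if $H^G\cong\Z_p$ then $H=p^kH^G$ is characteristic in $H^G$, hence normal in $G$, so no ``enlarging'' is needed), but you never establish that $H^G$ is cyclic.

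The paper avoids induction here entirely. After reducing to $k=1$ and enlarging $H$ to a \emph{maximal} finitely generated free normal subgroup of $J_1$ (which contains the original $H$ openly, so conclusions transfer back), it uses Proposition~\ref{prop:WZ} to see that $J_1/H$ is virtually free pro-$p$, and then analyses the products $HH^g$ for $g\in J_2$ directly. For non-abelian $H$ this forces $J_1$ to be finitely generated, hence Demushkin by the $\ccd=2$ case, contradicting the fact (Hillman--Schmidt) that a non-soluble Demushkin group has no finitely generated normal non-abelian free subgroup; so non-abelian free $H$ cannot occur. For $H\cong\Z_p$ the same product analysis shows that if $H$ is not already normal in $G$ then one finds an open polycyclic subgroup of $G$. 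This hands-on argument with $HH^g$ is the missing idea in your proposal.
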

\begin{proof} 
Let $\{J_i\mid 0\leq i\leq n\}$ be a chain  of minimal length $n$ among subnormal chains in $G$ with $H=J_0$ and let $k=\min\{i=1,\ldots,n\mid [J_i\colon H]=\infty\}$. Since $H$ has infinite index in $G$, it has cohomological dimension at most $2$ by Proposition~\ref{prop:serre}. Therefore,  $H$ is either infinite cyclic, a nonabelian free pro-$p$ group or it has cohomological dimension $2$. We therefore distinguish three cases some of which have several subcases.
\begin{description}[style=unboxed,leftmargin=0cm]
\item[Case 1] \noindent Assume $\ccd(H)=2$. Since $H$ is finitely presented and $H$ is open in $J_{k-1}$, $J_{k-1}$ is finitely presented and so the order of $H^2(J_{k-1},\F_p)$ is finite. Proposition~\ref{prop:WZ} yields $\ccd(J_k)=3$  because $J_{k-1}$ has infinite index in $J_k$.  By Proposition~\ref{prop:serre}, $J_k$ is open in $G$ and  $J_k$ is a pro-$p$ $\PD^3$-group (see Proposition~\ref{prop:open}). Proposition~\ref{JZ} implies that $J_{k-1}$ is Demushkin and $J_k/J_{k-1}$ is virtually cyclic. Finally, since $H$ is open in $J_{k-1}$, $H$ is Demushkin and, by Remark \ref{rem:norm}, $N_{J_k}(H)$ is open in $J_k$. Hence  (1) holds in this case. 
\end{description}

\medskip
For the cases with $\ccd(H)=1$ we may assume $k=1$ by Proposition~\ref{prop:serrefree}.
 Moreover, Proposition~\ref{prop:norm} yields $\ccd(J_1)> 1$ and so $J_1$ is not free pro-$p$. By Schreier index formula, there exist a maximal subgroup among finitely generated normal free pro-$p$ subgroups of $J_1$ containing $H$ as an open subgroup, so
from now on, we assume $H$ is maximal.
\begin{description}[style=unboxed,leftmargin=0cm]
\item[Case 2] \noindent Suppose $H$ is non-abelian.  First we note that  $\ccd(J_1)\neq 3$  because   
$H$ is not cyclic  (see Propositions~\ref{prop:open},~\ref{prop:serre} and~\ref{JZ}). Thus, $\ccd(J_1)=2$ and  $J_1/H$ is virtually free pro-$p$ by Proposition~\ref{prop:WZ}.

  Let $g\in G$ be an element that normalizes $J_1$. Then $HH^g/H\cong H^g/H\cap H^g$ is a finitely generated normal subgroup of the virtually free pro-$p$ group $J_1/H$ and so
    either $HH^g/H$ is finite or it has finite index in $J_1/H$. 
\end{description}
\begin{description}[style=unboxed,leftmargin=0cm]
\item[Subcase 2a]  $HH^g/H$ is finite for every $g\in J_2$. Then $H$ is open in $HH^g$ and by the maximality of $H$, $HH^g=H$ for every $g\in J_2$. Then, $H$ is normal in $J_2$ contradicting our assumption that the length $n$ was minimal. Thus this subcase does not occur.

\item[Subcase 2b]  $HH^g/H$ has finite index in $J_1/H$ for some $g\in J_2$. It follows that  $HH^g$ has finite index in $J_1$ and $J_1$ is finitely generated. Then $J_1/H$ is finitely generated, and so $J_1$ is finitely presented since it is virtually the semidirect product of finitely generated free pro-$p$ groups. By arguing as in Case 1 with $J_1$ for $H$, we deduce that $J_1$ is a pro-$p$ $\PD^2$, i.e., $J_1$ is Demushkin.  The group $J_1$ can not be  polycyclic since it contains the non-abelian free pro-$p$ subgroup $H$. If $J_1$ is Demushkin but not polycyclic this contradicts the fact that a non-soluble Demushkin group does not possess finitely generated normal free subgroups  (see \cite[Theorem 3]{HS} for example). Thus this subcase can not occur either, i.e. Case 2 does not occur.
\end{description}
\begin{description}[style=unboxed,leftmargin=0cm]
\item[Case 3] \noindent  Assume $H\cong\Z_p$.

\medskip 
{\bf Claim.} If  $H$ is not normal in $G$, then $G$ is  polycyclic.

\medskip
Proof. 
  If  there is $g\in J_2$ such that $HH^g$ is  2-generated, then $cd(HH^g)=2$ and  so $J_{1}/(HH^g)$ is virtually free. If $J_{1}/(HH^g)$ is infinite, it follows that $J_{1}$ contains a subgroup $HH^g\rtimes C$, with $C$  infinite cyclic, whose cohomological dimension is 3 and, hence, it is open in $G$ as needed. Otherwise, $HH^g$ is open in $J_{1}$ and so $cd(J_{1})=2$. As in Subcase 2b, $J_1$ is finitely presented. Then $J_2/J_{1}$ is virtually free and, as above, $J_2$ contains $J_{1}\rtimes C$ that has cohomological dimension 3. So $J_2$ is open in $G$ and therefore, by Proposition \ref{JZ}, $J_2/J_1$ is  virtually cyclic. Since $HH^g$ is polycyclic, we deduce that $G$ is polycyclic.  

Assume now that $HH^g$ is cyclic for every $g\in J_2$. Then $[HH^g:H]$ is finite and let $S=\bigcup S_j$ be an ascending union of finite subsets of $J_2$ such that $S$ is dense in $J_2$. Then $$A=\overline{\bigcup_{j}\prod_{g\in S_j} H^g},$$ (where $\prod$ means the internal product of normal subgroups in $J_1$)  is an abelian normal subgroup of $J_{2}$ that has to be of rank at most 3. If $A$ is not cyclic then as before $A\cong \Z_p\times \Z_p$ and $J_2/A$ is virtually free;  so $J_2$ contains $A\rtimes C$ with $C$ infinite cyclic, so $A\rtimes C$ and therefore $J_2$ is open in $G$ and hence $G$ is  polycyclic. If $A$ is cyclic then $H$ is open in $A$ and so is normal in $J_2$, a contradiction. This proves   the claim.    

\medskip
Thus from now on we assume that $H$ is normal in $G$. 
 By Proposition~\ref{JZ}, $G/H$ is virtually  Demushkin that corresponds to Case (3).
 \end{description}
\end{proof}


\end{document}